\DeclareMathSymbol{A}{\mathalpha}{operators}{"41}
\DeclareMathSymbol{B}{\mathalpha}{operators}{"42}
\DeclareMathSymbol{C}{\mathalpha}{operators}{"43}
\DeclareMathSymbol{D}{\mathalpha}{operators}{"44}
\DeclareMathSymbol{E}{\mathalpha}{operators}{"45}
\DeclareMathSymbol{F}{\mathalpha}{operators}{"46}
\DeclareMathSymbol{G}{\mathalpha}{operators}{"47}
\DeclareMathSymbol{H}{\mathalpha}{operators}{"48}
\DeclareMathSymbol{I}{\mathalpha}{operators}{"49}
\DeclareMathSymbol{J}{\mathalpha}{operators}{"4A}
\DeclareMathSymbol{K}{\mathalpha}{operators}{"4B}
\DeclareMathSymbol{L}{\mathalpha}{operators}{"4C}
\DeclareMathSymbol{M}{\mathalpha}{operators}{"4D}
\DeclareMathSymbol{N}{\mathalpha}{operators}{"4E}
\DeclareMathSymbol{O}{\mathalpha}{operators}{"4F}
\DeclareMathSymbol{P}{\mathalpha}{operators}{"50}
\DeclareMathSymbol{Q}{\mathalpha}{operators}{"51}
\DeclareMathSymbol{R}{\mathalpha}{operators}{"52}
\DeclareMathSymbol{S}{\mathalpha}{operators}{"53}
\DeclareMathSymbol{T}{\mathalpha}{operators}{"54}
\DeclareMathSymbol{U}{\mathalpha}{operators}{"55}
\DeclareMathSymbol{V}{\mathalpha}{operators}{"56}
\DeclareMathSymbol{W}{\mathalpha}{operators}{"57}
\DeclareMathSymbol{X}{\mathalpha}{operators}{"58}
\DeclareMathSymbol{Y}{\mathalpha}{operators}{"59}
\DeclareMathSymbol{Z}{\mathalpha}{operators}{"5A}
\theoremstyle{remark}
\newtheorem{remark}{Remark}[section]
\theoremstyle{plain}
\newtheorem{theorem}[remark]{Theorem}
\newtheorem{lemma}[remark]{Lemma}
\theoremstyle{definition}
\newtheorem{definition}[remark]{Definition}
\numberwithin{equation}{remark}
\newtheoremstyle{void} {.5\baselineskip plus .2\baselineskip minus .2\baselineskip} {.5\baselineskip plus .2\baselineskip minus .2\baselineskip} {\normalfont} {} {\bfseries} {.}
{5pt plus 1pt minus 1pt} 
{\thmname{#1 }\thmnumber{#2}\thmnote{.~{#3}}}
\theoremstyle{void}
\newtheorem{situation}[remark]{}
  \renewcommand{\setminus}{\mathbin{\rule[0.2em]{0.67em}{0.12em}}}%
  \renewcommand{\leq}{\leqslant}
  \renewcommand{\geq}{\geqslant}
  \renewcommand{\subset}{\subseteq}
\title{Weil conjectures and affine hypersurfaces}
\subjclass{Primary: 11G25;  Secondary: 14G15}
\author[Dingxin Zhang]{Dingxin Zhang}
\address{Center for Mathematics and Interdisciplinary Sciences, Fudan
  University, and Shanghai Institute for Mathematics and
  Interdisciplinary Sciences (SIMIS), Shanghai 200433, China}
\email{dingxinzhang@fudan.edu.cn}
\begin{document}

\begin{abstract}
We give yet another proof of the Riemann hypothesis for smooth proper
varieties over a finite field, by reducing to the case of a
hypersurface~\cite{katz_riemann-hypothesis-for-curves-and-hypersurfaces}
via deformation. The main tool is Artin's vanishing theorem together
with a few basic facts about perverse sheaves.
\end{abstract}

\maketitle

\section{Introduction}
Throughout this paper, we work over a finite base field
\(\mathbf{F}_q\).  Our notation is as follows: capital Latin letters
with a subscript \(0\), such as \(X_0, S_0\), denote separated schemes
of finite type defined over \(\mathbf{F}_q\).  Removing the subscript
(e.g., \(X, S\)) indicates the base change of these schemes to a fixed
algebraic closure \(\overline{\mathbf{F}}_q\).

We fix a prime number \(\ell\) which is invertible in
\(\mathbf{F}_q\), and throughout, we consider only
\(\overline{\mathbf{Q}}_{\ell}\)-sheaves.  Script letters with
subscript \(0\), like \(\mathcal{F}_0, \mathcal{G}_0\), refer to
sheaves on \(X_0, S_0\). Omitting the subscript denotes their pullback
to the corresponding scheme over \(\overline{\mathbf{F}}_q\).

\medskip
Let \(X_0\) be a separated scheme of finite type over \(\mathbf{F}_q\).
We define the zeta function of \(X_0\) as
\[
Z(X_0/\mathbf{F}_q, t) = \exp\left \{ \sum_{e \geq 1} \# X_0(\mathbf{F}_{q^e})\frac{t^e}{e} \right \},
\]
which is an element of \(1 + t\mathbf{Z}[\![t]\!]\).
Grothendieck's trace formula expresses the zeta function in terms of the action of the geometric Frobenius on the \(\ell\)-adic cohomology of \(X\):
\[
Z(X_0/\mathbf{F}_q, t) = \prod_{i=0}^{2\dim X_0}
\det\left(1 - tF, \mathrm{H}^i_c(X; \overline{\mathbf{Q}}_\ell)\right)^{(-1)^{i+1}}.
\]
Recall that \(F\), the \emph{geometric} Frobenius, is the inverse of
the arithmetic Frobenius automorphism \(a \mapsto a^q\) in
\(\mathrm{Gal}(\overline{\mathbf{F}}_q/\mathbf{F}_q)\), acting on the
\(\ell\)-adic cohomology groups by transport of structure.

Our purpose is to provide an alternative proof of the following
celebrated theorem of Deligne, known as the \emph{Riemann Hypothesis}
for smooth proper varieties defined over a finite field:

\begin{theorem}[Deligne~\cite{deligne_weil1, deligne_weil2}]
\label{theorem:rh}
Assume \(X_0\) is smooth and proper over \(\mathbf{F}_q\).
Suppose \(\alpha\) is an eigenvalue of the geometric Frobenius \(F\)
acting on
\(\mathrm{H}^i_{c}(X;\overline{\mathbf{Q}}_{\ell})=\mathrm{H}^i(X;\overline{\mathbf{Q}}_{\ell})\).
Then for any isomorphism
\(\iota\colon \overline{\mathbf{Q}}_{\ell} \to \mathbf{C}\), we have
\(|\iota(\alpha)|=q^{\frac{i}{2}}\).
\end{theorem}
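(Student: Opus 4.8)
The plan is to prove, by induction on $\dim V_0$, the following elastic form of the weight estimate: for every separated scheme $V_0$ of finite type over a finite extension of $\mathbf{F}_q$ and every integer $j$, the group $\mathrm{H}^j_c(V;\overline{\mathbf{Q}}_\ell)$ has all geometric Frobenius eigenvalues of weight $\leq j$. This suffices for Theorem~\ref{theorem:rh}: if $X_0$ is smooth and proper of dimension $n$, the bound gives weights $\leq i$ on $\mathrm{H}^i(X)=\mathrm{H}^i_c(X)$, while applying it in the complementary degree and combining with Poincaré duality $\mathrm{H}^i(X)\cong\mathrm{H}^{2n-i}_c(X)^{\vee}(-n)$ forces weights $\geq i$; hence $\mathrm{H}^i(X)$ is pure of weight $i$, which is the assertion. (Throughout one uses freely that the conclusion of Theorem~\ref{theorem:rh} survives a finite extension of $\mathbf{F}_q$, since $|\iota(\alpha)^m|=|\iota(\alpha^m)|$.)

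The first half of the argument is a dévissage resting on Artin's vanishing theorem. Excision along a stratification of $V_0$ into smooth locally closed pieces, with the inductive hypothesis absorbing the lower-dimensional strata, reduces to $V_0$ smooth; Mayer--Vietoris over a finite affine cover --- whose pairwise intersections are again smooth and affine --- reduces to $V_0$ smooth and affine, say of dimension $n$. On such a $V_0$ Artin vanishing gives $\mathrm{H}^j(V;\overline{\mathbf{Q}}_\ell)=0$ for $j>n$, hence $\mathrm{H}^j_c(V;\overline{\mathbf{Q}}_\ell)=0$ for $j<n$ by Poincaré duality, so the problem concentrates in the single degree $j=n$. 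Now, after a finite extension of $\mathbf{F}_q$, Noether normalization together with the primitive element theorem produces a finite morphism $\nu\colon V_0\to\bar V_0$, generically an isomorphism, onto a hypersurface $\bar V_0\subset\mathbf{A}^{n+1}$; since $\nu$ is finite, $\mathrm{H}^n_c(V;\overline{\mathbf{Q}}_\ell)=\mathrm{H}^n_c(\bar V;\nu_*\overline{\mathbf{Q}}_\ell)$, and the short exact sequence $0\to\overline{\mathbf{Q}}_{\ell}\to\nu_*\overline{\mathbf{Q}}_\ell\to\mathcal{Q}\to 0$ on $\bar V$ has $\mathcal{Q}$ supported on the non-normal locus, of dimension $<n$. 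The cohomology of $\mathcal{Q}$ is controlled by the inductive hypothesis --- it is cut out of the constant-coefficient cohomology of the finite covers of the strata of $\operatorname{Sing}\bar V$ produced by $\nu$ --- and one is reduced to bounding $\mathrm{H}^n_c(\bar V;\overline{\mathbf{Q}}_\ell)$ for $\bar V$ a possibly singular affine hypersurface of dimension $n$.

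This is where the deformation enters, and where the hypersurface hypothesis is exploited. The equation $\bar V_0=\{f=0\}$ is one member of the linear system $\mathcal{V}\to S$ of all affine hypersurfaces of its degree, whose general member is smooth; for a smooth member Katz's theorem~\cite{katz_riemann-hypothesis-for-curves-and-hypersurfaces}, combined with the standard weak Lefschetz reductions relating a smooth affine hypersurface, the associated smooth projective hypersurface, and its smooth hyperplane section --- all hypersurfaces of dimension $\leq n$ --- supplies the desired bound on $\mathrm{H}^n_c$. To propagate the bound to the special member $\bar V$ one works over the henselian trait at $\bar V$ inside a generic pencil in $\mathcal{V}$: the distinguished triangle $\overline{\mathbf{Q}}_\ell\to R\psi\,\overline{\mathbf{Q}}_\ell\to R\phi\,\overline{\mathbf{Q}}_\ell\xrightarrow{+1}$ on $\bar V$, together with the identification $\mathrm{H}^\bullet_c(\bar V;R\psi\,\overline{\mathbf{Q}}_\ell)\cong\mathrm{H}^\bullet_c(\text{nearby smooth member})$, presents $\mathrm{H}^n_c(\bar V)$ as an extension of a subobject of $\mathrm{H}^n_c(\text{smooth member})$ by a quotient of $\mathrm{H}^{n-1}_c(\operatorname{Sing}\bar V;R\phi\,\overline{\mathbf{Q}}_\ell)$. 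Now $\operatorname{Sing}\bar V$ is affine of dimension $\leq n-1$, $R\phi\,\overline{\mathbf{Q}}_\ell$ is up to shift a perverse sheaf supported on it, and Artin vanishing for perverse sheaves on affine schemes --- together with the perverse $t$-exactness of $R\phi$ and a stratification of $\operatorname{Sing}\bar V$ along which the transverse singularity types, hence the cohomology sheaves of $R\phi\,\overline{\mathbf{Q}}_\ell$, are explicit --- reduces the estimate for this group to the inductive hypothesis applied, degree by degree, to the strata and their covers.

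The step I expect to be the main obstacle is exactly this control of the vanishing-cycle term. Weighing the contributions of the strata of $\operatorname{Sing}\bar V$ against their codimensions, the inductive hypothesis by itself yields only the estimate that $\mathrm{H}^n_c(\bar V)$ has weights $\leq n+1$ --- the familiar ``off by one'' of the subject, the weight-$(n+1)$ contributions arising from the deeper strata. Closing this gap is the heart of the method: one must show that these contributions do not survive into $\mathrm{H}^n_c(\bar V)$, a strictness/degeneration statement for the relevant spectral sequences, which should follow by an additional positivity argument in the spirit of Deligne's original proof, the genuinely arithmetic input being the purity --- by Katz's theorem --- of the cohomology of the nearby smooth member. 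Granting this, the induction closes, and Theorem~\ref{theorem:rh} follows as explained: for $X_0$ of dimension $n$ the groups $\mathrm{H}^i(X)$ with $i\ne n$ are governed, through a smooth hyperplane section and Poincaré duality, by the theorem in dimension $n-1$, and $\mathrm{H}^n(X)$ by the affine-hypersurface bound above together with the self-duality pairing $\mathrm{H}^n(X)\times\mathrm{H}^n(X)\to\overline{\mathbf{Q}}_\ell(-n)$; the cases $n\leq 1$ are contained in~\cite{katz_riemann-hypothesis-for-curves-and-hypersurfaces}.
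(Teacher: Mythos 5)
Your reduction to affine hypersurfaces runs close to the paper's: both pass, via excision and the inductive hypothesis on lower-dimensional complements, to a birational hypersurface model in affine space, and both then concentrate on the middle compactly supported cohomology by Artin vanishing. (One small slip: for a smooth affine $V$ of dimension $n$, Artin vanishing kills $\mathrm{H}^j_c(V)$ for $j<n$ but not for $j>n$, so the problem does not ``concentrate in the single degree $j=n$''; you still need to treat $j>n$, as the paper does via a Gysin map for a general linear section, Lemma~\ref{lemma:gysin}, plus the induction.)

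The genuine gap is the one you already flag, and it is not a loose end but the entire point. Passing to a henselian trait and using $\overline{\mathbf{Q}}_\ell\to R\psi\,\overline{\mathbf{Q}}_\ell\to R\phi\,\overline{\mathbf{Q}}_\ell\xrightarrow{+1}$ presents $\mathrm{H}^n_c(\bar V)$ as an extension of a subobject of $\mathrm{H}^n_c(\text{nearby smooth member})$ by $\mathrm{H}^{n-1}_c(\operatorname{Sing}\bar V;R\phi\,\overline{\mathbf{Q}}_\ell)$. Your induction controls constant-coefficient cohomology, not the pointwise weights of $R\phi\,\overline{\mathbf{Q}}_\ell$; stratifying and invoking the inductive hypothesis ``degree by degree'' cannot supply the missing factor of $q^{1/2}$, and you correctly diagnose the honest output as weight $\leq n+1$. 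The sentence ``a strictness/degeneration statement \dots\ which should follow by an additional positivity argument'' is exactly where a proof would have to appear, and it is not at all clear that your vanishing-cycle set-up admits a clean Rankin-type argument without further input.

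The paper's route avoids the vanishing-cycle triangle altogether. Its two key ingredients, both absent from your sketch, are: (1) the perverse degeneration lemma (Lemma~\ref{lemma:perverse-degeneration-lemma}), which, for the one-parameter family $\pi\colon\mathscr{X}_0\to\mathbf{A}^1$ linking $\bar V$ to a smooth hypersurface, produces a single injection
\[
\mathrm{H}^n_c(\bar V;\overline{\mathbf{Q}}_\ell)\hookrightarrow\bigl(\mathrm{R}^0 j_*(\mathrm{R}^n\pi_!\overline{\mathbf{Q}}_\ell|_V)\bigr)_0,
\]
with $j\colon V\hookrightarrow\mathbf{A}^1$ the locus of smooth fibers --- this is where Artin vanishing and perverse coconnectivity of $\mathrm{R}\pi_!\overline{\mathbf{Q}}_\ell[n+1]$ enter, and it makes the $R\phi$-term disappear from the estimate; and (2) Deligne's semicontinuity of weights (Lemma~\ref{lemma:semicont}), asserting that $\mathrm{R}^0 j_*$ of a local system of pointwise weight $\leq w$ again has pointwise weight $\leq w$. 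The latter is the Rankin tensor-power positivity you were gesturing at, but applied to a concrete local system --- $\mathrm{R}^n\pi_!\overline{\mathbf{Q}}_\ell|_V$, of pointwise weight $\leq n$ by Katz's theorem --- and proved from the trivial $L$-function bound of Lemma~\ref{lemma:simple}. Combined, these two lemmas give weight $\leq n$ on $\mathrm{H}^n_c(\bar V)$ in one stroke, with no off-by-one to repair. As written, your proposal does not close; the two lemmas above are exactly the missing engine.
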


In terms of the definition below, for a smooth proper variety
\(X_0\) over \(\mathbf{F}_q\), the Riemann Hypothesis asserts that
every eigenvalue of the geometric Frobenius acting on
\(\mathrm{H}^i(X;\overline{\mathbf{Q}}_{\ell})\) has \(q\)-weight
exactly equal to \(i\).

\begin{definition}[Weight]
Let \(\iota\colon \overline{\mathbf{Q}}_{\ell} \to \mathbf{C}\) be an
isomorphism of fields.  Let \(q > 0\) be a real number, and let
\(\alpha \in \overline{\mathbf{Q}}_{\ell}\).  We say that \(\alpha\)
has \(q\)-\emph{weight} (or simply \emph{weight} when the number \(q\)
is understood from the context) \(\leq w\) (with respect to
the isomorphism \(\iota\)), if
\[
|\iota(\alpha)| \leq q^{w/2}.
\]
We say \(\alpha\) has \(q\)-weight equal to \(w\), if
\(|\iota(\alpha)| = q^{w/2}\).
\end{definition}

Katz~\cite{katz_riemann-hypothesis-for-curves-and-hypersurfaces}
provides an (arguably) elementary and accessible proof of the Riemann
Hypothesis for smooth projective hypersurfaces in a projective space.
We shall reduce Theorem~\ref{theorem:rh}
to this case.

\begin{theorem}
\label{theorem:katz}
Let \(X_{0} \subset \mathbf{P}^{n}\) be a smooth hypersurface.  Then
the Riemann Hypothesis holds for \(X_{0}\).
\end{theorem}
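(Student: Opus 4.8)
We sketch the argument of~\cite{katz_riemann-hypothesis-for-curves-and-hypersurfaces}, which is Deligne's proof~\cite{deligne_weil1} in the concrete special case of hypersurfaces. The plan is first to reduce to the middle cohomology. By the weak Lefschetz theorem the restriction $\mathrm{H}^i(\mathbf{P}^n;\overline{\mathbf{Q}}_\ell) \to \mathrm{H}^i(X;\overline{\mathbf{Q}}_\ell)$ is an isomorphism for $i < n-1$ and injective for $i = n-1$, and dually its cokernel vanishes for $i > n-1$; thus outside degree $n-1$ the Frobenius eigenvalues on $\mathrm{H}^i(X)$ are among the powers $q^{i/2}$ already present on $\mathbf{P}^n$, and it remains to prove that the primitive part $\mathrm{P} := \operatorname{coker}\!\bigl(\mathrm{H}^{n-1}(\mathbf{P}^n) \to \mathrm{H}^{n-1}(X)\bigr)$ is pure of weight $n-1$. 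One observes that $\mathrm{P}$ is the orthogonal complement, under the cup-product pairing, of the line spanned by a power of the hyperplane class; it is therefore a non-degenerate direct summand equipped with a perfect pairing $\mathrm{P} \otimes \mathrm{P} \to \overline{\mathbf{Q}}_\ell(-(n-1))$, so its eigenvalues occur in pairs $\alpha \leftrightarrow q^{n-1}/\alpha$, and it is enough to bound their weights above by $n-1$.

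This bound is proved by induction on $n$, the case $n = 1$ (a finite set of points) being trivial. For the inductive step one fixes a Lefschetz pencil of hyperplane sections of $X$, with smooth locus $j\colon U \hookrightarrow \mathbf{P}^1$ and associated family $f\colon \widetilde{X} \to \mathbf{P}^1$ --- the blow-up of $X$ along the axis. Every smooth fiber is again a smooth hypersurface of the same degree, now sitting in $\mathbf{P}^{n-1}$, so the inductive hypothesis says exactly that the lisse sheaf $\mathcal{F}$ on $U$ whose stalks are the primitive middle cohomology groups of these fibers is \emph{pure of weight $n-2$}. Picard--Lefschetz theory --- the formalism of vanishing cycles together with the global invariant cycle theorem --- then identifies $\mathrm{P}$, modulo summands that are visibly pure of weight $n-1$, with the interior cohomology $\operatorname{Im}\!\bigl(\mathrm{H}^1_c(U,\mathcal{F}) \to \mathrm{H}^1(U,\mathcal{F})\bigr)$, which is a quotient of $\mathrm{H}^1_c(U,\mathcal{F})$. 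It therefore suffices to show that the weights of $\mathrm{H}^1_c(U,\mathcal{F})$ are at most $n-1$.

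This last statement is Deligne's ``Main Lemma'', and it is the only genuinely hard point --- all of the preceding is formal. One would prove it by the Rankin--Selberg ``squaring trick'', whose two inputs are: (i) after applying $\iota$ the Frobenius traces $u \mapsto \operatorname{tr}(F_u \mid \mathcal{F}_u)$ are real --- indeed rational, being differences of point counts of the fibers $\widetilde{X}_u$ and powers of $q$ --- so that the traces on every even tensor power $\mathcal{F}^{\otimes 2k}$ are non-negative and the $L$-function $L(U, \mathcal{F}^{\otimes 2k}, t)$ is a power series with non-negative coefficients; and (ii) the geometric monodromy group of $\mathcal{F}$ is as large as it can be, a full symplectic or orthogonal group, which follows from Picard--Lefschetz (exhibiting the vanishing cycles as a single nontrivial orbit under reflections) via the Kazhdan--Margulis argument, and which pins down the coinvariants of $\mathcal{F}^{\otimes 2k}$, hence the location and order of the pole of $L(U, \mathcal{F}^{\otimes 2k}, t)$ --- a pole that, by the purity of $\mathcal{F}$, sits at $t = q^{-(n-2)k-1}$ and comes from $\mathrm{H}^2_c$. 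Feeding these into Landau's lemma on the singularities of power series with non-negative coefficients, and letting $k \to \infty$, yields the bound: the weights of $\mathrm{H}^1_c(U,\mathcal{F})$ are at most $n-1$. Combined with the self-duality of $\mathrm{P}$ recorded above, this forces $\mathrm{P}$ to be pure of weight $n-1$, and the induction is complete. (A few low-degree cases --- quadrics and the cubic surface --- have finite monodromy, but the varieties involved are then rational and the assertion is immediate.) The main obstacle is thus concentrated entirely in this Main Lemma, and more precisely in securing its two inputs (i) and (ii); the estimate itself then follows formally from the squaring trick.
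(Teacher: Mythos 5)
Your sketch is a correct outline of Deligne's original Weil~I argument restricted to hypersurfaces, but it is \emph{not} the argument of Katz's paper that the theorem cites, and the distinction matters here. The paper describes Katz's argument as follows: one considers the universal family of smooth degree-$d$ hypersurfaces in $\mathbf{P}^n$ over the (connected, smooth) parameter space, and, using Deligne's reinterpretation of Rankin's method applied to that family together with the self-duality of primitive cohomology, one shows that the Riemann Hypothesis for one smooth member propagates to all smooth members. One then closes the argument by verifying purity directly, via Gauss sums, for a single explicit hypersurface: a diagonal hypersurface when $\gcd(d,p)=1$ (Weil), and the Gabber hypersurface $T_0^d + T_0T_1^{d-1}+\cdots+T_{n-1}T_n^{d-1}=0$ when $p \mid d$. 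This route entirely avoids Lefschetz pencils, vanishing cycles, the global invariant cycle theorem, and---crucially---the Kazhdan--Margulis big-monodromy theorem, which you yourself flag as the ``genuinely hard point'' of the argument you sketched.

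The misattribution is not a minor slip, because the whole strategy of the paper is to rest the general Weil conjectures on the \emph{lightest possible} hypersurface input: Katz's theorem is cited precisely because it can be established with Gauss sums and a soft specialization step, without the heavy Picard--Lefschetz machinery. By reintroducing Lefschetz pencils and big monodromy, your proposal re-derives the theorem by the route the paper is deliberately circumventing. Your sketch would, if fleshed out, prove Theorem~\ref{theorem:katz} correctly (indeed it proves the full Theorem~\ref{theorem:rh} directly), but it does not match the cited proof, and adopting it would undercut the paper's claim to give a less technical reduction.
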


Katz's proof of Theorem~\ref{theorem:katz} is in effect a
specialization argument.  Building on Deligne's interpretation of
Rankin's method, Katz showed that to establish the Riemann Hypothesis
for \emph{all} smooth hypersurfaces of degree \(d\) in
\(\mathbf{P}^n\), it is enough to find just \emph{one} smooth
hypersurface of degree \(d\) in \(\mathbf{P}^n\) for which the Riemann
Hypothesis holds.  Depending on whether the degree \(d\) is coprime to
the characteristic of the base field, we can construct the following
examples:
\begin{itemize}
\item If \(d\) is coprime to the characteristic of \(\mathbf{F}_q\) then
Weil~\cite{weil_equations-in-finite-fields} proved, using only basic
properties of Gauss sums, that the so-called \emph{diagonal hypersurfaces}
\[
a_{0} T_{0}^{d} + \cdots + a_{n} T_{n}^{d} = 0, \qquad (a_{0}, \ldots,
a_{n} \in \mathbf{F}_q^{\ast})
\]
satisfy the Riemann Hypothesis.
\item If \(d\) is divisible by the characteristic, then Katz used
properties of Gauss sums to verify the Riemann Hypothesis for the
so-called \emph{Gabber hypersurface}:
\[
T_{0}^{d} + T_{0} T_{1}^{d-1} + \cdots + T_{n-1} T_{n}^{d-1} = 0.
\]
\end{itemize}

In this paper, we use a degeneration argument to reduce the
general case of the Riemann Hypothesis for smooth proper varieties
to the hypersurface case covered by Theorem~\ref{theorem:katz}.
Earlier, Scholl~\cite{scholl_hypersurfaces-and-weil-conjectures}
developed a reduction method relying on alterations and the weight
spectral sequence of Steenbrink--Rapoport--Zink.  In contrast, the
engine of our approach is Artin's vanishing
theorem~\cite[Exposé~XIV,~3.1]{sga4}, used together with a simple
perverse degeneration lemma for families over a curve
(\cite[Proposition~9]{katz_perverse-origin},
\cite[Lemma~3.1]{wan-zhang_betti-number-bounds-for-varieties-and-exponential-sums0}).
We recall only the basic facts about the perverse t-structure that are
needed for this argument.

In fact, we will prove a slightly more general assertion, which is a
direct consequence of the main theorem of Weil
II~\cite{deligne_weil2}:

\begin{theorem}\label{theorem:wrh}
Let \(X_{0}\) be a separated scheme of finite type over
\(\mathbf{F}_{q}\).  Then for every integer \(i\), all eigenvalues of Frobenius acting on
\(\mathrm{H}^{i}_{c}(X;\overline{\mathbf{Q}}_{\ell})\) have
\(q\)-weight \(\leq i\).
\end{theorem}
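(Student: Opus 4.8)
The plan is to prove Theorem~\ref{theorem:wrh} by induction on $d=\dim X_{0}$; write $W(d)$ for the assertion restricted to schemes of dimension $\leq d$, the case $d\leq 0$ being immediate. First I would record two elementary facts about the class of finite-dimensional Frobenius modules all of whose eigenvalues have weight $\leq w$: it is stable under passing to subquotients, and if $A^{\bullet}\to B^{\bullet}\to C^{\bullet}\xrightarrow{+1}$ is a distinguished triangle such that $\mathrm H^{i}(A^{\bullet})$ and $\mathrm H^{i}(C^{\bullet})$ have weight $\leq i$ for all $i$, then so does $\mathrm H^{i}(B^{\bullet})$. Feeding the excision triangle $j_{!}j^{\ast}\to\mathrm{id}\to i_{\ast}i^{\ast}$ of an open--closed decomposition into these, Noetherian induction reduces $W(d)$ to the case $X_{0}$ smooth, then (stratifying into affine opens) $X_{0}$ smooth affine of pure dimension $d$; applying Noether normalization and a generic linear projection — both available after a finite extension of $\mathbf{F}_{q}$, which does not affect $q$-weights — presents $X_{0}$, modulo closed subsets of dimension $<d$ covered by $W(d-1)$, as a possibly singular affine hypersurface $H_{0}=V(f_{0})\subset\mathbf{A}^{d+1}$. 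So it is enough to prove that $\mathrm H^{i}_{c}(H;\overline{\mathbf{Q}}_{\ell})$ has weight $\leq i$ for all $i$.

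The next, and decisive, step is a deformation. Pick a polynomial $g_{0}$ so that the pencil $f_{0}+tg_{0}=0$ cuts out a scheme $\mathcal H_{0}\subset\mathbf{A}^{d+1}\times\mathbf{A}^{1}$ whose projection $f_{0}\colon\mathcal H_{0}\to\mathbf{A}^{1}_{0}$ has $f_{0}^{-1}(0)=H_{0}$ and, over a dense open $U_{0}\subset\mathbf{A}^{1}_{0}$, smooth fibres $Y_{t}$ whose projective closures $\overline Y_{t}\subset\mathbf{P}^{d+1}$ are also smooth (Bertini, again after a harmless extension of the base field). This supplies two estimates. (i) By Katz's Theorem~\ref{theorem:katz} the groups $\mathrm H^{i}(\overline Y_{t})$ have weight $\leq i$, while the hyperplane section $\overline Y_{t}\cap\mathbf{P}^{d}$ is a hypersurface of dimension $d-1$ handled by $W(d-1)$; the excision triangle for $Y_{t}\subset\overline Y_{t}$ then gives $\mathrm H^{i}_{c}(Y_{t})$ of weight $\leq i$, so that $R^{i}f_{0!}\overline{\mathbf{Q}}_{\ell}\vert_{U_{0}}$ is lisse and pointwise of weight $\leq i$. (ii) Since $f_{0}+tg_{0}$ is affine-linear in $t$, the projection to $\mathbf{A}^{d+1}$ exhibits $\mathcal H_{0}$ as the disjoint union of the open piece $\mathbf{A}^{d+1}\setminus V(g_{0})$ and the closed piece $(V(f_{0})\cap V(g_{0}))\times\mathbf{A}^{1}$; choosing $g_{0}$ generic so that $V(g_{0})$ is a smooth affine hypersurface and $V(f_{0})\cap V(g_{0})$ a complete intersection of dimension $d-1$, the estimate of (i) and $W(d-1)$, together with a K\"unneth computation and the excision triangles, yield $\mathrm H^{i}_{c}(\mathcal H_{0})$ of weight $\leq i$ for every $i$ — note that this uses the pencil's total space and its generic fibres but not the special fibre $H_{0}$.

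Now I would invoke Artin's vanishing theorem and the perverse degeneration lemma for the affine morphism $f_{0}$. The total space $\mathcal H_{0}$ is a hypersurface, hence a local complete intersection, so $\overline{\mathbf{Q}}_{\ell,\mathcal H_{0}}[d+1]$ is perverse; as $f_{0}$ is affine with fibres of dimension $d$, Artin's theorem places $K:=Rf_{0!}\overline{\mathbf{Q}}_{\ell}[d+1]$ in ${}^{p}D^{[0,d]}(\mathbf{A}^{1}_{0})$ and confines $\mathrm H^{n}_{c}(\mathbf{A}^{1}_{0},P)$ to $n\in\{0,1\}$ for each perverse constituent $P$. One then analyses the perverse cohomology sheaves of $K$ on the affine line: by (i) they are lisse of the expected weight on $U_{0}$, and by (ii) the weights of $\mathrm H^{\ast}_{c}(\mathbf{A}^{1}_{0},K)=\mathrm H^{\ast+d+1}_{c}(\mathcal H_{0})$ are bounded; the perverse degeneration lemma then controls the stalk $i_{0}^{\ast}K=R\Gamma_{c}(H;\overline{\mathbf{Q}}_{\ell})[d+1]$, giving that every Frobenius eigenvalue on $\mathrm H^{i}_{c}(H;\overline{\mathbf{Q}}_{\ell})$ has weight $\leq i$ and completing the induction. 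This last step is the crux: what must be ruled out is an excess of weight in the special stalk coming from the \emph{other} degenerate members of the pencil, whose cohomology the induction does not control — precisely the situation that the perverse degeneration lemma, powered by Artin's confinement to a $1$-dimensional affine base, is designed to handle, and making the weight bookkeeping match up is where the real work lies.

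Finally, Theorem~\ref{theorem:wrh} yields Theorem~\ref{theorem:rh}: if $X_{0}$ is smooth and proper of dimension $d$, then $\mathrm H^{i}(X)=\mathrm H^{i}_{c}(X)$ has weight $\leq i$, and Poincar\'e duality $\mathrm H^{i}(X)\cong\mathrm H^{2d-i}(X)^{\vee}(-d)$ converts the weight bound on $\mathrm H^{2d-i}$ into the bound weight $\geq i$ on $\mathrm H^{i}$, whence $\mathrm H^{i}(X)$ is pure of weight $i$.
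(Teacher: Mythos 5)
Your overall architecture matches the paper's (induction on dimension, reduction to an affine hypersurface, a pencil joining it to a hypersurface with smooth projective closure, Katz's theorem on the generic fibres, Artin vanishing plus the perverse degeneration lemma at the special point), but the decisive step is missing, and your proposed substitute for it does not work. The perverse degeneration lemma~\ref{lemma:perverse-degeneration-lemma} only produces an injection
\(\mathrm{H}^{d}_{c}(H;\overline{\mathbf{Q}}_{\ell}) \hookrightarrow \bigl(\mathrm{R}^{0}j_{\ast}\,\mathrm{R}^{d}f_{!}\overline{\mathbf{Q}}_{\ell}|_{U}\bigr)_{0}\); it carries no weight information by itself. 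What converts it into a weight bound is the semicontinuity of weights (Lemma~\ref{lemma:semicont}, Deligne's Lemme~1.8.1), i.e.\ the statement that \(\mathrm{R}^{0}j_{\ast}\) of a lisse sheaf of pointwise weight \(\leq w\) again has pointwise weight \(\leq w\) --- proved by the L-function/trivial-bound argument of Lemma~\ref{lemma:simple} together with the tensor-power trick. Your proposal never invokes this, and your replacement input (ii), the weight bound on \(\mathrm{H}^{\ast}_{c}(\mathcal{H})\) of the total space, cannot do its job: running the excision triangle for \(f^{-1}(U)\subset\mathcal{H}\), the stalk group \(\mathrm{H}^{d}_{c}(H)\) is caught between \(\mathrm{H}^{d}_{c}(\mathcal{H})\) and \(\mathrm{H}^{d+1}_{c}(f^{-1}(U))\), and the latter contains the Leray piece \(\mathrm{H}^{1}_{c}(U,\mathrm{R}^{d}f_{!})\), whose weight can only be bounded by \(d+2\) from trivial estimates; bounding it by \(d+1\) is essentially the main theorem of Weil~II for curves, which is exactly what one is not allowed to assume here. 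So ``making the weight bookkeeping match up'', which you defer, is not bookkeeping: without Lemma~\ref{lemma:semicont} the argument does not close. (The total-space estimate (ii) is correct but is simply not needed in the paper's proof.)

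A second gap: you assert that the degeneration lemma ``controls the stalk \(i_{0}^{\ast}K=R\Gamma_{c}(H)[d+1]\)'', but the lemma only controls the cohomology of that stalk in the single degree \(-1\), i.e.\ the middle group \(\mathrm{H}^{d}_{c}(H)\). Degrees below the middle vanish by Artin's theorem, but the degrees \(\mathrm{H}^{d+i}_{c}(H)\) with \(i\geq 1\) are not touched by your argument at all; the paper handles them separately via the perverse weak Lefschetz theorem (Lemma~\ref{lemma:gysin}), which gives a Gysin surjection \(\mathrm{H}^{d-i}_{c}(H\cap L^{(i)};\overline{\mathbf{Q}}_{\ell}(-i))\twoheadrightarrow \mathrm{H}^{d+i}_{c}(H;\overline{\mathbf{Q}}_{\ell})\) for a general codimension-\(i\) linear section, reducing to the middle-degree bound for a lower-dimensional affine hypersurface. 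You need an argument of this kind (or some other device) for the above-middle range before the induction is complete.
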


\begin{proof}[Proof that Theorem~\ref{theorem:wrh} \(\Rightarrow\) Theorem~\ref{theorem:rh}]
This is a consequence of the Poincaré duality.
Suppose \(X_{0}\) is of pure dimension \(d\).
Let \(0\leq i\leq 2d\).  Then by Theorem~\ref{theorem:wrh}, any
Frobenius eigenvalue \(\alpha\) of
\(\mathrm{H}^{i}(X;\overline{\mathbf{Q}}_{\ell})\) satisfies the
inequality
\[
|\iota(\alpha)| \leq q^{\frac{i}{2}}.
\]
On the other hand, Poincaré
duality gives an isomorphism
\[
\mathrm{H}^i(X; \overline{\mathbf{Q}}_\ell) \simeq \mathrm{H}^{2d-i}(X; \overline{\mathbf{Q}}_\ell)^{\vee} \otimes \overline{\mathbf{Q}}_\ell(-d).
\]
Therefore, \(q^{d}/\alpha\) is also a Frobenius eigenvalue of \(\mathrm{H}^{2d-i}(X;\overline{\mathbf{Q}}_{\ell})\).
Hence
\[
\left| \iota\left( \frac{q^{d}}{\alpha} \right) \right| \leq q^{d-\frac{i}{2}},
\]
or equivalently \(|\iota(\alpha)| \geq q^{\frac{i}{2}}\).
This completes the proof.
\end{proof}

The paper is organized as follows.  Section~\ref{sec:perv} recalls the
few facts about perverse sheaves that we need.  In
Section~\ref{sec:trivial}, we explain the lower semicontinuity of
weights.  Finally, in Section~\ref{sec:proof}, we assemble these
ingredients to prove Theorem~\ref{theorem:wrh}.

\medskip\noindent%
\textbf{Acknowledgments.}
This proof arose from teaching a course on the Weil conjectures in
Fall 2025.  At first, I planned to cover Scholl's reduction, but I
noticed that the perverse degeneration
lemma~\ref{lemma:perverse-degeneration-lemma} could be used to give an
arguably less technical proof.  The other key input, i.e., Artin's
vanishing theorem, was needed in
\cite{deligne_weil1,scholl_hypersurfaces-and-weil-conjectures} anyway.
I am grateful to the students in my class for their questions and
feedback.

I worked out the details while at the Tianyuan Mathematical Research
Center, during the event ``Exponential sums: Theory, Computation, and
Applications''
organized by Daqing Wan and Ping Xi.  I thank the organizers for
inviting me, and the Center for a wonderful working environment.

I would like to thank Shizhang Li, Daqing Wan, and an anonymous
referee for carefully reading the manuscript and for their
suggestions that improved the exposition of this paper.  I am also
grateful to Tony Scholl for sharing the historical background and
motivation behind his
paper~\cite{scholl_hypersurfaces-and-weil-conjectures}.

\section{A few facts about the perverse t-structure}
\label{sec:perv}

Let \(k\) be either an algebraically closed field or a finite field,
and let \(X\) be a variety over \(k\), i.e., a separated scheme of
finite type over \(k\).  We write \(\mathrm{D}^b_c(X)\) for the
bounded derived category of constructible complexes of
\(\overline{\mathbf{Q}}_{\ell}\)-sheaves on \(X\), where \(\ell\) is a
prime number invertible in \(k\).  This category admits the middle
perversity t-structure
\[
({}^{\mathrm{p}}\mathrm{D}^{\leq0}_{c}(X), {}^{\mathrm{p}}\mathrm{D}_{c}^{\geq0}(X)).
\]
We shall not reproduce the definition here.  See~\cite[\S\S1.3, 1.4,
and 2.2.12]{bbd} or \cite[Lemma-Definition
III~1.1]{kiehl-weissauer_weil-conjectures-perverse-sheaves}.


Objects in \({}^{\mathrm{p}}\mathrm{D}^{\leq0}_{c}(X)\)
(resp.~\({}^{\mathrm{p}}\mathrm{D}^{\geq0}_{c}(X)\)) are called
\emph{perverse connective} (resp.~\emph{perverse coconnective}).
Objects in the heart
\[
{}^{\mathrm{p}}\mathrm{D}^{\leq0}_{c}(X) \cap  {}^{\mathrm{p}}\mathrm{D}_{c}^{\geq0}(X)
\]
of this t-structure are called \emph{perverse sheaves}.

It is very easy to describe the perverse t-structure on a curve.  For
our purposes, we only need the following description, see
\cite[p.~136]{kiehl-weissauer_weil-conjectures-perverse-sheaves}.

\begin{definition}\label{lemma:coconn-curve}
Let \(B\) be a smooth, geometrically connected curve over an
algebraically closed field \(k\).  Let
\(\mathcal{F} \in \mathrm{D}^{b}_{c}(B)\).  Then
\(\mathcal{F} \in {}^{\mathrm{p}}\mathrm{D}^{\geq0}_{c}(B)\) if and only if the
following two conditions hold:
\begin{enumerate}
\item \(\mathrm{H}^{e}(\mathcal{F}_{\overline{\eta}}) = 0\) for all
\(e \leq -2\), where \(\overline{\eta}\) is a geometric generic point
of \(B\), and
\item \(\mathrm{H}^{e}(R\iota_{s}^{!} \mathcal{F}) = 0\) for every
\(e\leq -1\), where \(\iota_{s}\colon \{s\} \to B\) is a closed
immersion of a closed point.
\end{enumerate}
\end{definition}


\begin{lemma}[Perverse degeneration lemma, cf.~{\cite[Proposition~9]{katz_perverse-origin}, \cite[Lemma~3.1]{wan-zhang_betti-number-bounds-for-varieties-and-exponential-sums0}}]
\label{lemma:perverse-degeneration-lemma}
Let \(B\) be a smooth connected curve over an algebraically closed
field.  Let \(\mathcal{F}\in{}^{\mathrm{p}}\mathrm{D}^{\geq0}_c(B)\)
such that \(j^{\ast}\mathcal{F}\) is lisse for a nonempty
open immersion \(j\colon U \to B\).
Then, for every closed point \(s\in B\), there is a natural injective
map
\[
\mathrm{H}^{-1}(\mathcal{F}_s)
\hookrightarrow
\bigl(\mathrm{R}^0j_*\mathcal{H}^{-1}(j^*\mathcal{F})\bigr)_s.
\]
\end{lemma}

\begin{proof}
Let \(\iota\colon \{s\} \to B\) be the closed immersion and
\(j\colon B\setminus\{s\}\to B\) the open immersion.  After replacing
\(B\setminus \{s\}\) by an open dense subscheme, we can assume that
\(\mathcal{H}^{e}(j^{\ast}\mathcal{F})\) are lisse sheaves on
\(B\setminus\{s\}\) for all \(e \in \mathbf{Z}\).

Consider the distinguished triangle
\[
\iota_{\ast}\mathrm{R}\iota^{!}\mathcal{F} \to \mathcal{F} \to \mathrm{R}j_{\ast} j^{\ast}\mathcal{F} \to \iota_{\ast}\mathrm{R}\iota^{!}\mathcal{F}[1].
\]
Applying the exact functor \(\iota^{\ast}\), we
obtain a distinguished triangle in \(\mathrm{D}^b_c(\{s\})\)
\[
\mathrm{R}\iota^{!}\mathcal{F} \to \iota^{\ast}\mathcal{F} \to \iota^{\ast}\mathrm{R}j_{\ast} j^{\ast}\mathcal{F} \to \mathrm{R}\iota^{!}\mathcal{F}[1].
\]
Passing to cohomology, this induces a long exact sequence:
\[
\cdots \to \mathrm{H}^{-1}(\mathrm{R}\iota^{!}\mathcal{F}) \to \mathrm{H}^{-1}(\iota^{\ast}\mathcal{F}) \to \mathrm{H}^{-1}(\iota^{\ast}\mathrm{R}j_{\ast} j^{\ast}\mathcal{F}) \to \mathrm{H}^0(\mathrm{R}\iota^{!}\mathcal{F}) \to \cdots
\]
Thanks to Definition~\ref{lemma:coconn-curve}(2), we have
\(\mathrm{H}^{e}(\mathrm{R}\iota^{!}\mathcal{F}) = 0\) for all \(e \leq -1\).
In particular, \(\mathrm{H}^{-1}(\mathrm{R}\iota^{!}\mathcal{F}) = 0\).  Thus,
from the long exact sequence above, we obtain an injective map
\begin{equation}\label{equation:injective-map}
\mathrm{H}^{-1}(\iota^{\ast}\mathcal{F}) \hookrightarrow \mathrm{H}^{-1}(\iota^{\ast}\mathrm{R}j_{\ast} j^{\ast}\mathcal{F}).
\end{equation}
Since \(\iota^{\ast}\) is exact,
\(\mathrm{H}^{-1}(\iota^{\ast}\mathrm{R}j_{\ast} j^{\ast}\mathcal{F})\) coincides with the
stalk at \(s\) of the sheaf \(\mathrm{R}^{-1}j_{\ast} j^{\ast}\mathcal{F}\).

We now analyze \(\mathrm{R}^{-1}j_{\ast} j^{\ast}\mathcal{F}\) via the spectral sequence
\[
E_2^{a,b} = \mathrm{R}^a j_{\ast}\, \mathcal{H}^b(j^{\ast}\mathcal{F}) \implies \mathrm{R}^{a+b}j_{\ast} j^{\ast}\mathcal{F}.
\]

Since \(\mathcal{H}^{b}(j^{\ast}\mathcal{F})\) is a lisse sheaf on
\(B\setminus\{s\}\), and its stalk at a geometric generic point
\(\overline{\eta}\) of \(B\setminus\{s\}\) is zero when \(b\leq -2\)
by Definition~\ref{lemma:coconn-curve}(1), we have
\(\mathcal{H}^b(j^*\mathcal{F}) = 0\) when \(b \leq -2\).  On the
other hand, for any lisse sheaf \(\mathcal{E}\) on
\(B\setminus\{s\}\), one has \(\mathrm{R}^a j_* \mathcal{E} = 0\)
unless \(a \geq 0\).  Since \(\mathcal{H}^{b}(j^{\ast}\mathcal{F})\)
is lisse for any \(b\), we see that \(E_2^{a, b} = 0\) whenever
\(a + b \leq -2\), and the only nontrivial contribution to
\(\mathrm{R}^{-1}j_* j^* \mathcal{F}\) arises from \(a = 0\),
\(b = -1\). That is,
\[
\mathrm{R}^{-1}j_{\ast}j^{\ast}\mathcal{F} = \mathrm{R}^0 j_{\ast} \mathcal{H}^{-1}(j^{\ast}\mathcal{F}).
\]
Therefore,
\begin{equation}\label{equation:stalk-of-direct-image}
\mathrm{H}^{-1}\bigl(\iota^{\ast}\mathrm{R}j_{\ast} j^{\ast}\mathcal{F}\bigr) = \left(\mathrm{R}^0 j_{\ast} \mathcal{H}^{-1}(j^{\ast}\mathcal{F})\right)_s.
\end{equation}
The proof is complete by combining the injective map \eqref{equation:injective-map} with \eqref{equation:stalk-of-direct-image}.
\end{proof}

Below, we summarize several facts about the perverse t-structure that
will be used later.  To keep the exposition clear, we will not include
detailed proofs here, but will provide references for each result.
The proofs are conceptually simple, with the exception of Artin's
vanishing theorem (Theorem~\ref{theorem:relative-affine-lefschetz}),
whose proof is substantially more subtle.

\begin{lemma}\label{lemma:perversity-local-system-on-smooth-variety}
Let $X$ be a variety of dimension $\leq n$.  Then for any plain sheaf
\(\mathcal{F}\) on \(X\), $\mathcal{F}[n]$ is perverse connective.  In
particular, if \(X\) is smooth of pure dimension \(n\), and
\(\mathcal{F}\) is lisse, then \(\mathcal{F}[n]\) is a
perverse sheaf.
\end{lemma}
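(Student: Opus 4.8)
The plan is to verify the support condition $\dim \operatorname{Supp} \mathcal{H}^i(\mathcal{F}[n]) \leq -i$ for every $i \in \mathbf{Z}$, which by the stated characterization is exactly what it means for $\mathcal{F}[n]$ to be perverse connective. First I would note that $\mathcal{H}^i(\mathcal{F}[n]) = \mathcal{H}^{i+n}(\mathcal{F})$, and since $\mathcal{F}$ is a plain sheaf (concentrated in degree $0$), this vanishes unless $i + n = 0$, i.e.\ unless $i = -n$. For the single surviving cohomology sheaf $\mathcal{H}^{-n}(\mathcal{F}[n]) = \mathcal{F}$, the required inequality reads $\dim \operatorname{Supp} \mathcal{F} \leq n$, which holds because $\operatorname{Supp} \mathcal{F}$ is a closed subset of $X$ and $\dim X \leq n$ by hypothesis. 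This establishes that $\mathcal{F}[n] \in {}^{\mathrm{p}}\mathrm{D}^{\leq 0}_c(X)$.

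For the second assertion, suppose now that $X$ is smooth of pure dimension $n$ and $\mathcal{F}$ is a local system. The first part already gives perverse connectivity, so it remains to show $\mathcal{F}[n]$ is perverse coconnective, i.e.\ $\mathbf{D}_X(\mathcal{F}[n]) \in {}^{\mathrm{p}}\mathrm{D}^{\leq 0}_c(X)$. The key input is that on a smooth variety of pure dimension $n$, the dualizing complex $a^! \overline{\mathbf{Q}}_\ell$ is $\overline{\mathbf{Q}}_\ell(n)[2n]$ (smooth base change / Poincaré duality for smooth morphisms), and for a local system $\mathcal{F}$ one has $R\mathcal{H}\mathit{om}(\mathcal{F}, \overline{\mathbf{Q}}_\ell) = \mathcal{F}^\vee$, again a local system placed in degree $0$. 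Hence
\[
\mathbf{D}_X(\mathcal{F}[n]) = R\mathcal{H}\mathit{om}(\mathcal{F}[n], \overline{\mathbf{Q}}_\ell(n)[2n]) = \mathcal{F}^\vee(n)[2n - n] = \mathcal{F}^\vee(n)[n].
\]
This is again a plain sheaf (the local system $\mathcal{F}^\vee(n)$) shifted by $n$, so by the first part it lies in ${}^{\mathrm{p}}\mathrm{D}^{\leq 0}_c(X)$. Therefore $\mathcal{F}[n]$ is both perverse connective and perverse coconnective, hence a perverse sheaf.

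The only real subtlety is the identification $a^! \overline{\mathbf{Q}}_\ell \simeq \overline{\mathbf{Q}}_\ell(n)[2n]$ for $a\colon X \to \operatorname{Spec} k$ smooth of pure relative dimension $n$, together with the compatibility of $R\mathcal{H}\mathit{om}$ with this; both are standard (purity for smooth morphisms, cf.\ \cite{sga4} or \cite{bbd}), so I would simply cite them. Everything else is a one-line degree count. I do not anticipate any genuine obstacle here; the statement is essentially the assertion that the perverse t-structure is normalized so that local systems on smooth varieties, shifted into the middle, are perverse — which is the defining feature of middle perversity — and the proof is the bookkeeping that makes this precise.
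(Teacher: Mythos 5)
Your argument matches the paper's proof: the first assertion is the direct degree count from the support condition, and the second reduces to the identity $\mathbf{D}_X(\mathcal{F}[n]) = \mathcal{F}^\vee(n)[n]$, which the paper cites and you derive from purity. Correct and essentially identical in approach.
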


\begin{proof}
The first assertion follows straightforwardly from the definition of a
perverse connective complex (see \cite[Lemma-Definition
III~1.1]{kiehl-weissauer_weil-conjectures-perverse-sheaves}).  For the
second assertion, we use the fact that for a lisse sheaf
\(\mathcal{F}\) on a smooth variety, we have
\(\mathbf{D}_X(\mathcal{F}[n]) = \mathcal{F}^{\vee}[n](n)\).
\end{proof}

\begin{lemma}\label{lemma:local-equation-and-perverse-coconnective}
Let \(X\) be a variety and let \(\mathcal{F} \in {}^{\mathrm{p}}\mathrm{D}^{\geq0}_{c}(X)\) be perverse coconnective.
Suppose \(V\)
is a closed subscheme of \(X\) locally cut out by \(r\) regular
functions.  Then \(\mathcal{F}[-r]|_V\) is perverse
coconnective.
\end{lemma}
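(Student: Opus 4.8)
The plan is to reduce by dévissage to the case $r=1$ and then to reduce that case to Artin's vanishing theorem. First, since membership in ${}^{\mathrm{p}}\mathrm{D}^{\geq 0}_{c}$ can be checked locally on the base (\cite[\S2.1]{bbd}), I would cover $X$ by open subsets $W$ on which $V$ is cut out by $r$ regular functions $f_{1},\dots,f_{r}\in\mathcal{O}(W)$; since $j^{\ast}$ for an open immersion is perverse $t$-exact, $\mathcal{F}|_{W}$ is again perverse coconnective, and it suffices to treat each $W$ separately. On such a $W$, put $X_{0}=W$ and $X_{k}=\{f_{1}=\dots=f_{k}=0\}\subset X_{k-1}$, with closed immersions $i_{k}\colon X_{k}\hookrightarrow X_{k-1}$, so that $X_{r}=V\cap W$ and each $i_{k}$ realizes $X_{k}$ as the vanishing locus of a single regular function (the restriction of $f_{k}$ to $X_{k-1}$). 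Setting $\mathcal{G}_{0}=\mathcal{F}|_{W}$ and $\mathcal{G}_{k}=i_{k}^{\ast}\mathcal{G}_{k-1}[-1]$, one has $\mathcal{G}_{r}=\mathcal{F}|_{V\cap W}[-r]$, so it is enough to show by induction on $k$ that each $\mathcal{G}_{k}$ is perverse coconnective. This reduces everything to the following claim: if $f$ is a regular function on a variety $Y$, $i\colon D=\{f=0\}\hookrightarrow Y$ the closed immersion, and $\mathcal{G}\in{}^{\mathrm{p}}\mathrm{D}^{\geq 0}_{c}(Y)$, then $i^{\ast}\mathcal{G}[-1]\in{}^{\mathrm{p}}\mathrm{D}^{\geq 0}_{c}(D)$.

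To prove this claim, the key point is that the complementary open immersion $j\colon U=Y\setminus D\hookrightarrow Y$ is \emph{affine}: it is the base change along $f\colon Y\to\mathbf{A}^{1}$ of the principal open immersion $\mathbf{G}_{m}\hookrightarrow\mathbf{A}^{1}$. Hence Artin's vanishing theorem (Theorem~\ref{theorem:relative-affine-lefschetz}) applies to $j$; in its dual form it gives $j_{!}\bigl({}^{\mathrm{p}}\mathrm{D}^{\geq 0}_{c}(U)\bigr)\subset{}^{\mathrm{p}}\mathrm{D}^{\geq 0}_{c}(Y)$. Then I would feed this into the gluing triangle $j_{!}j^{\ast}\mathcal{G}\to\mathcal{G}\to i_{\ast}i^{\ast}\mathcal{G}\xrightarrow{+1}$, rotated to $i_{\ast}i^{\ast}\mathcal{G}[-1]\to j_{!}j^{\ast}\mathcal{G}\to\mathcal{G}\xrightarrow{+1}$. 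Since $j^{\ast}$ is $t$-exact, $j^{\ast}\mathcal{G}\in{}^{\mathrm{p}}\mathrm{D}^{\geq 0}_{c}(U)$ and therefore $j_{!}j^{\ast}\mathcal{G}\in{}^{\mathrm{p}}\mathrm{D}^{\geq 0}_{c}(Y)$, while $\mathcal{G}\in{}^{\mathrm{p}}\mathrm{D}^{\geq 0}_{c}(Y)$ by hypothesis. The long exact sequence of perverse cohomology then forces ${}^{\mathrm{p}}\mathrm{H}^{n}(i_{\ast}i^{\ast}\mathcal{G}[-1])=0$ for all $n<0$, i.e.\ $i_{\ast}i^{\ast}\mathcal{G}[-1]\in{}^{\mathrm{p}}\mathrm{D}^{\geq 0}_{c}(Y)$. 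Finally $i_{\ast}$ is $t$-exact and conservative, hence reflects membership in ${}^{\mathrm{p}}\mathrm{D}^{\geq 0}_{c}$, and we conclude $i^{\ast}\mathcal{G}[-1]\in{}^{\mathrm{p}}\mathrm{D}^{\geq 0}_{c}(D)$.

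I expect the only non-formal ingredient to be Artin's vanishing theorem in the dual form above; granting it, the rest is a routine dévissage through principal divisors together with the recollement triangle. Two points will need a word of care. First, ``locally cut out by $r$ regular functions'' does \emph{not} exhibit $V$ as a global intersection of $r$ Cartier divisors, so one genuinely needs the locality of the perverse $t$-structure in order to pass to globally defined functions $f_{i}$. Second, the claim requires that the complement of the zero locus be an \emph{affine} open immersion: this is where the hypothesis ``regular function $Y\to\mathbf{A}^{1}$'', rather than merely ``effective Cartier divisor'', is used, since the complement of a global Cartier divisor need not be affine.
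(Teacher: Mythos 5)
Your proof is correct, and it is the expected argument. The paper itself offers no proof of this lemma, only the citation \cite[Lemma~2.1.1]{wan-zhang_betti-number-bounds-for-varieties-and-exponential-sums0}; but the route you take --- localize, then dévissage to the case of a single equation, then note the complementary open immersion is affine and feed Artin's vanishing theorem (in the \(j_!\) form) into the recollement triangle \(j_!j^{\ast}\mathcal{G}\to\mathcal{G}\to i_{\ast}i^{\ast}\mathcal{G}\to{}\) --- is the standard one and is what the cited proof does in substance. The reduction to \(r=1\), the use of perverse \(t\)-exactness of \(j^{\ast}\) and \(i_{\ast}\), and the long exact sequence argument are all sound.

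One small correction to your closing remarks: the second caveat you raise is not quite right. The complement of an effective Cartier divisor \(D\subset Y\) is \emph{always} an affine open immersion, because being an affine morphism is local on the target, and on any affine open \(W\subset Y\) where \(D\cap W=V(f)\) the preimage is the basic open \(W_{f}\). What can fail for a general Cartier divisor is only that the complement be an affine \emph{scheme}; but that plays no role in Artin's theorem, which needs only the affineness of the morphism \(j\). So the distinction you draw between ``regular function \(Y\to\mathbf{A}^{1}\)'' and ``effective Cartier divisor'' is immaterial here, and your realization of \(j\) as the base change of \(\mathbf{G}_{m}\hookrightarrow\mathbf{A}^{1}\) is just one way among several to see that \(j\) is affine. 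Your first caveat --- the genuine need to localize because \(V\) is only \emph{locally} cut out by \(r\) functions, combined with the local nature of the perverse \(t\)-structure --- is the one that is actually load-bearing.
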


\begin{proof}
See \cite[Lemma~2.1.1]{wan-zhang_betti-number-bounds-for-varieties-and-exponential-sums0}.
\end{proof}

It follows immediately that if \(X\) is smooth of pure dimension \(n\),
\(V \subset X\) is locally cut out by \(r\) equations, and
\(\dim V = n - r\), then \(\overline{\mathbf{Q}}_{\ell,V}[n-r]\) is a
perverse sheaf on \(V\).

\medskip%
We next state Artin's vanishing theorem which underlies the various
cohomological Lefschetz hyperplane theorems in algebraic geometry, and
is also a key tool of this paper.

\begin{theorem}[Artin's vanishing theorem]
\label{theorem:relative-affine-lefschetz}
Let \(\pi\colon X \to S\) be an affine morphism of varieties. Let
\(\mathcal{F} \in {}^{\mathrm{p}}\mathrm{D}_c^{\geq0}(X)\) be perverse coconnective on
\(X\).  Then \(\mathrm{R}\pi_{!}\mathcal{F}\) is perverse coconnective
on \(S\).
\end{theorem}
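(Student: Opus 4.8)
The plan is to reduce the general affine morphism to the case of a single affine-space bundle (or even to the affine line) by factoring $\pi$ and using standard dévissage. First I would recall that $\mathrm{R}\pi_!$ being perverse coconnective is equivalent, after applying Verdier duality, to a statement about $\mathrm{R}\pi_*$ on perverse connective objects; by the definition of the perverse t-structure via support conditions, it suffices to check that $\dim \operatorname{Supp} \mathcal{H}^i(\mathrm{R}\pi_! \mathbf{D}_S\mathcal{G}) \le -i$, so one can work with the cohomology-dimension formulation throughout. The key classical input is that an affine morphism of relative dimension $d$ has the property that $\mathrm{R}^q\pi_!$ of a constructible sheaf vanishes for $q > 2d$ and $\mathrm{R}^q\pi_*$ vanishes for $q > d$ when restricted to any affine target stratum — the latter being exactly Artin's original affine vanishing, which I would invoke either as a black box from \cite[Exposé~XIV,~3.1]{sga4} or prove in the curve case directly.

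The main steps, in order, would be: (1) reduce to $S$ affine by working Zariski-locally on $S$ and noting perverse coconnectivity is local; (2) factor $\pi$ as a closed immersion into $S \times \mathbf{A}^N$ followed by the projection, and handle the closed immersion trivially (it is finite, hence affine and exact for the perverse t-structure in both directions); (3) reduce the projection $S \times \mathbf{A}^N \to S$ by induction on $N$ to the case $N = 1$, i.e.\ to the projection $p\colon S \times \mathbf{A}^1 \to S$; (4) for $p$, use the hyperplane-section dévissage: stratify, apply the long exact sequence relating $\mathrm{R}p_!$ over $\mathbf{A}^1$ with $\mathrm{R}p_!$ over $\mathbf{G}_m$ and the fiber at $0$, and control cohomological dimension using that $\mathbf{A}^1$ and $\mathbf{G}_m$ have cohomological dimension $\le 2$ for cohomology with compact supports and $\le 1$ for ordinary cohomology of affine curves (Lemma-style input); (5) combine the Leray spectral sequence $\mathrm{R}^a p_!\, \mathcal{H}^b(\mathcal{F}) \Rightarrow \mathcal{H}^{a+b}(\mathrm{R}p_!\mathcal{F})$ with the support bound $\dim\operatorname{Supp}\mathcal{H}^b(\mathcal{F}) \le -b$ and the relative-dimension-one vanishing to conclude $\dim\operatorname{Supp}\mathcal{H}^{a+b}(\mathrm{R}p_!\mathcal{F}) \le -(a+b)$. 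Here the perverse degeneration lemma~\ref{lemma:perverse-degeneration-lemma} is exactly the input that upgrades the naive estimate at the boundary point: over a curve base it controls the extra cohomology sitting in degree $-1$ at points of $\Sigma$ by the rank of the generic local system, which is precisely what prevents the dimension count from being off by one at special fibers.

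The hard part will be step (4)–(5): getting the cohomological-dimension bookkeeping exactly right at the boundary of $\mathbf{A}^1$ (the fiber over $0$), where the naive estimate from the spectral sequence is too weak by one unit and one genuinely needs the perverse degeneration lemma (or equivalently a careful use of the affine vanishing $\mathrm{R}^2 p_! = 0$ combined with the constructibility structure) to recover the sharp inequality. Everything else — the reduction to $S$ affine, the factorization through $S \times \mathbf{A}^N$, the treatment of closed immersions, and the inductive reduction in $N$ — is formal manipulation with the support conditions and exactness properties already recorded in Lemmas~\ref{lemma:perversity-local-system-on-smooth-variety} and~\ref{lemma:local-equation-and-perverse-coconnective}. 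I would therefore organize the write-up so that the combinatorial heart is isolated into the single statement ``$\mathrm{R}p_!$ for $p\colon S\times\mathbf{A}^1\to S$ preserves perverse coconnectivity,'' with its proof drawing on Lemma~\ref{lemma:perverse-degeneration-lemma} applied fiberwise over curves in $S$, and then deduce the general case by the dévissage outlined above.
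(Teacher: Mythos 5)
The paper does not actually prove this theorem — it is cited as a black box from SGA\,4, Exposé~XIV, 3.1, and BBD, Théorème~4.1.1 — and the introduction explicitly flags Artin's vanishing theorem as the one external deep input. Your proposal is therefore taking a genuinely different route, namely sketching a proof from scratch. The coarse shape of the dévissage is reasonable (dualize to turn the claim into left perverse-exactness of $\mathrm{R}\pi_*$ on perverse connective objects, work locally, factor through a closed immersion into $S\times\mathbf{A}^{N}$, use the support-condition characterization of ${}^{\mathrm p}\mathrm D^{\le 0}$, invoke absolute Artin vanishing on fibers). But there is a genuine gap, and it concerns precisely the step you identify as the ``hard part.''

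The role you assign to the perverse degeneration lemma~\ref{lemma:perverse-degeneration-lemma} is backwards. That lemma takes as \emph{hypothesis} a perverse coconnective complex on a curve and deduces a constraint on its stalks; it is a downstream consequence of perverse coconnectivity, not a tool for establishing it. In the proof you are trying to construct, perverse coconnectivity of $\mathrm{R}\pi_{!}\mathcal{F}$ (or perverse connectivity of $\mathrm{R}\pi_{*}\mathcal{G}$) is precisely what you do not yet know, so you cannot apply Lemma~\ref{lemma:perverse-degeneration-lemma} ``fiberwise over curves in $S$'' to it. Indeed, the paper itself uses Lemma~\ref{lemma:perverse-degeneration-lemma} only after it has applied Theorem~\ref{theorem:relative-affine-lefschetz} to produce the perverse coconnective complex it needs; the logical order in the paper is Theorem~\ref{theorem:relative-affine-lefschetz} $\Rightarrow$ (a specific) perverse coconnectivity $\Rightarrow$ Lemma~\ref{lemma:perverse-degeneration-lemma}. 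Attempting the reverse is circular. What actually closes the gap in the classical argument is a different ingredient you do not mention: Deligne's generic base change theorem (\cite{deligne_finitude}). One shows that for a constructible sheaf $\mathcal G'$ on $X$ with $\dim\operatorname{Supp}\mathcal G' \le m$, the stalk of $\mathrm R^{a}\pi_{*}\mathcal G'$ at a \emph{generic} point of any $k$-dimensional subvariety $T\subset S$ computes the cohomology of an affine variety of dimension $\le m-k$ over a field, which vanishes for $a>m-k$ by absolute Artin vanishing; this gives $\dim\operatorname{Supp}\mathrm R^{a}\pi_{*}\mathcal G' \le m-a$, and then the Leray spectral sequence and the support conditions finish. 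The word ``generic'' is where generic base change is indispensable, and no amount of $\mathbf{A}^{1}$-fibration dévissage or invocation of Lemma~\ref{lemma:perverse-degeneration-lemma} substitutes for it.

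One smaller remark: in your step (5) you write the spectral sequence for $\mathrm R p_{!}$ but then invoke the support-condition inequality $\dim\operatorname{Supp}\mathcal H^{b} \le -b$, which characterizes ${}^{\mathrm p}\mathrm D^{\le 0}$, not ${}^{\mathrm p}\mathrm D^{\ge 0}$. If you dualize (as you say you want to at the start), you should consistently be working with $\mathrm R p_{*}$ and perverse connective objects in the spectral-sequence bookkeeping; as written, the two halves of the argument are talking past each other.
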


\begin{proof}
See \cite[Exposé~XIV,~3.1]{sga4} or \cite[Théorème~4.1.1]{bbd}.
\end{proof}

Here is another useful result not easily found in standard references
on étale cohomology or perverse sheaves.  This statement is a
consequence of Deligne's perverse weak Lefschetz
theorem~\cite[Appendix]{katz_affine-cohomological-transforms}, which
in turn is based on Artin's vanishing theorem
together with Deligne's generic base change
theorem~\cite{deligne_finitude}.

\begin{lemma}[Perverse weak Lefschetz theorem]
\label{lemma:gysin}
Let \(X\) be an algebraic variety over an algebraically closed field
\(k\), and let \(X \to \mathbf{P}^{N}\) be a quasi-finite
morphism.  Suppose
\(\mathcal{F} \in {}^{\mathrm{p}}\mathrm{D}^{\leq0}_{c}(X)\) is
perverse connective.  Then, for a general hyperplane
\(B \subset \mathbf{P}^{N}\), the Gysin map
\[
\mathrm{H}^{i-2}_{c}\left(X \times_{\mathbf{P}^{N}} B; \mathcal{F}|_{X\times_{\mathbf{P}^{N}}B}(-1)\right) \longrightarrow \mathrm{H}^{i}_{c}(X; \mathcal{F})
\]
is surjective for \(i = 1\), and is an isomorphism for all \(i \geq 2\).
\end{lemma}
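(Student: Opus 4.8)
The plan is to reduce the statement to the behavior of perverse cohomology under the affine "open complement" fibration, exactly in the spirit of Deligne's original argument. Write $H$ for a hyperplane in $\mathbf{P}^N$, $V = \mathbf{P}^N \setminus H$ for the complementary affine space, and form the Cartesian squares pulling these back along $\iota$ to get $X_B := X \times_{\mathbf{P}^N} B$ (closed in $X$) and $X_V := X \times_{\mathbf{P}^N} V$ (open in $X$), with $v\colon X_V \hookrightarrow X$ and $b\colon X_B \hookrightarrow X$ the open and closed immersions. The long exact sequence of the pair relating $\mathrm{H}^\bullet_c(X_V; v^*\mathcal{F})$, $\mathrm{H}^\bullet_c(X; \mathcal{F})$, and $\mathrm{H}^\bullet_c(X_B; b^*\mathcal{F})$ is the Gysin/excision triangle; after the Tate twist bookkeeping it identifies the map in the statement with the connecting-or-restriction map in that triangle. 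Concretely, $b^* \mathcal{F}$ sits in degree governed by a codimension-one cut, so $\mathrm{H}^{i-2}_c(X_B; \mathcal{F}(-1))$ is the relevant term. Thus the lemma is equivalent to the vanishing $\mathrm{H}^{i}_c(X_V; v^*\mathcal{F}) = 0$ for $i \leq 0$, together with the surjectivity at $i=1$ and bijectivity at $i \geq 2$ following formally from that vanishing and the long exact sequence — provided $B$ is chosen generally.

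The key point is the vanishing $\mathrm{H}^i_c(X_V; v^*\mathcal{F}) = 0$ for $i \le 0$. Here is where Artin's vanishing enters. Since $v$ is an open immersion, $v^*\mathcal{F} \in {}^{\mathrm{p}}\mathrm{D}^{\leq 0}_c(X_V)$ is still perverse connective. The morphism $X_V \to V = \mathbf{A}^N$ induced by $\iota$ is quasi-finite (being a base change of the quasi-finite $\iota$), hence can be factored — after a further general choice — so that composing with a generic linear projection $\mathbf{A}^N \to \mathbf{A}^1$ yields an affine morphism $\pi\colon X_V \to \mathbf{A}^1$; more simply, $X_V \to \mathrm{Spec}\,k$ is the composite of the affine structure morphism of $\mathbf{A}^N$ with a quasi-finite (hence affine, by Zariski's main theorem up to the quasi-finite-plus-separated factorization) map, so $X_V$ is itself affine over the point through an affine morphism. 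Applying Theorem~\ref{theorem:relative-affine-lefschetz} to $\pi\colon X_V \to \mathrm{Spec}\,k$ shows $\mathrm{R}\pi_! (v^*\mathcal{F}) = \mathrm{R}\Gamma_c(X_V; v^*\mathcal{F})$ is perverse coconnective on the point, i.e. concentrated in degrees $\geq 0$; applying it to the dual statement, or invoking the self-dual formulation of Artin vanishing for perverse connective sheaves under affine morphisms, gives the cohomology with compact support vanishing in degrees $\le -1$, and the boundary degree $i=0$ is handled by the perverse connectivity on the nose. This is precisely the content of Deligne's perverse weak Lefschetz theorem in the Appendix to~\cite{katz_affine-cohomological-transforms}, whose proof additionally uses Deligne's generic base change~\cite{deligne_finitude} to arrange that "general hyperplane" is an open dense condition and that the relevant complexes behave uniformly in the family of hyperplanes.

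The main obstacle, and the only genuinely nontrivial input, is the generic base change / constructibility argument that legitimizes the phrase "for a general hyperplane $B$": one must work over the dual projective space $(\mathbf{P}^N)^\vee$ parametrizing hyperplanes, consider the universal incidence correspondence pulled back to $X$, and show that outside a proper closed subset of $(\mathbf{P}^N)^\vee$ the formation of $\mathrm{R}\Gamma_c(X_V; v^*\mathcal{F})$ commutes with base change to the point corresponding to $B$ and that the above affineness and quasi-finiteness hypotheses hold. Deligne's generic base change theorem supplies exactly this, and once the relative statement over a dense open of $(\mathbf{P}^N)^\vee$ is set up, Artin's vanishing (Theorem~\ref{theorem:relative-affine-lefschetz}) applied fiberwise, combined with the excision triangle above, finishes the proof. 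I would therefore spend the bulk of the write-up carefully citing~\cite[Appendix]{katz_affine-cohomological-transforms} for the generic-base-change scaffolding and then give the short excision-triangle deduction in detail, since that last deduction is elementary and self-contained given the vanishing.
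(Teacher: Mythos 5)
Your argument has two genuine gaps, and the second is fatal to the approach as written.

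First, the excision bookkeeping does not identify the Gysin map with a map in the triangle you write down. The long exact sequence coming from \(v_!v^*\mathcal{F}\to\mathcal{F}\to b_*b^*\mathcal{F}\) contains the \emph{restriction} map \(\mathrm{H}^i_c(X;\mathcal{F})\to\mathrm{H}^i_c(X_B;b^*\mathcal{F})\), which goes the wrong way. The Gysin map \(\mathrm{H}^{i-2}_c(X_B;\mathcal{F}(-1))\to\mathrm{H}^i_c(X;\mathcal{F})\) comes instead from the co-unit \(b_*\mathrm{R}b^!\mathcal{F}\to\mathcal{F}\), and to make it look as it does in the statement one must first prove the generic purity isomorphism \(\mathrm{R}b^!\mathcal{F}\simeq b^*\mathcal{F}[-2](-1)\) for a general hyperplane. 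This is not mere uniformity bookkeeping; it is a substantive ingredient (established via Deligne's generic base change), and you do not establish or invoke it. Moreover, the third term of \emph{that} triangle after \(\mathrm{R}\Gamma_c\) is \(\mathrm{R}\Gamma_c(X;\mathrm{R}v_*v^*\mathcal{F})\), which is not \(\mathrm{R}\Gamma_c(X_V;v^*\mathcal{F})\) (\(v\) is not proper), so even the vanishing you try to reduce to is not the one you would actually need.

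Second, and decisively, you apply Artin's vanishing theorem with the perversity inequality running the wrong way. Theorem~\ref{theorem:relative-affine-lefschetz} says: if \(\pi\) is affine and the input is perverse \emph{coconnective} (\({}^{\mathrm{p}}\mathrm{D}^{\geq0}_c\)), then \(\mathrm{R}\pi_!\) of it is perverse coconnective. But \(v^*\mathcal{F}\) is perverse \emph{connective} (\({}^{\mathrm{p}}\mathrm{D}^{\leq0}_c\)); Artin's theorem gives you nothing about \(\mathrm{R}\pi_!(v^*\mathcal{F})\). The dual statement, which is what actually applies to a perverse connective input, concerns \(\mathrm{R}\pi_*\), i.e.\ ordinary cohomology, and yields the upper vanishing \(\mathrm{H}^i(X_V;v^*\mathcal{F})=0\) for \(i>0\), not a lower vanishing for \(\mathrm{H}^i_c\). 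There is no ``self-dual formulation'' that lets you get compactly supported vanishing directly from a connective hypothesis. The paper sidesteps this precisely by dualizing first: set \(\mathcal{G}=\mathbf{D}\mathcal{F}\), which \emph{is} perverse coconnective, apply Deligne's perverse weak Lefschetz (itself a consequence of Artin vanishing used in the correct direction) to the restriction maps in ordinary cohomology \(\mathrm{H}^i(X;\mathcal{G})\to\mathrm{H}^i(X_B;\mathcal{G}|_B)\), and then dualize back using the generic purity isomorphism to turn the dual of the restriction map into the Gysin map. Your approach must be reorganized along those lines; as written the direction of Artin vanishing is inverted. (A smaller slip: a quasi-finite separated morphism is quasi-affine by Zariski's Main Theorem, not affine, so \(X_V\) need not be affine over the point; but this becomes moot once the argument is dualized.)
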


\begin{proof}[Sketch of proof]
Set \(\mathcal{G} = \mathbf{D}\mathcal{F}\).  Then
\(\mathcal{G} \in {}^{\mathrm{p}}\mathrm{D}^{\geq0}_c(X)\).  By
Deligne's perverse weak Lefschetz
theorem~\cite[Corollary~A.5]{katz_affine-cohomological-transforms}
(while the statement was for perverse sheaves, the proof only uses
that the complex is coconnective in order to apply Artin's vanishing
theorem), for a general hyperplane \(B\), the restriction map
\begin{equation}
\label{eq:deligne-lef}
\mathrm{H}^i(X;\mathcal{G}) \to \mathrm{H}^i(X\times_{\mathbf{P}^N}B;\mathcal{G}|_{B})
\end{equation}
is injective if \(i = -1\), and bijective if \(i \leq -2\).  Using
Deligne's generic base change
(\cite[Corollaire~2.9]{deligne_finitude}), one can show that for \(B\)
general, the morphism \(\iota\colon X \times_{\mathbf{P}^N} B \to X\)
also satisfies
\begin{equation}
\label{eq:gen-purity}
\mathrm{R}\iota^{!}\mathcal{F} \simeq \iota^{\ast}\mathcal{F}[-2](-1)
\end{equation}
(see, e.g., \cite[Lemma~3.2]{wan-zhang_colevel} for details).
The lemma follows from applying the biduality theorem
to \eqref{eq:deligne-lef}
and then using \eqref{eq:gen-purity}.
\end{proof}

\section{Semicontinuity of weights}
\label{sec:trivial}

Before proving Theorem~\ref{theorem:wrh},
we establish the semicontinuity of weights (Lemma~\ref{lemma:semicont}),
a cohomological enhancement of the ``trivial estimates''
(Lemma~\ref{lemma:simple}).
This result is standard
(see \cite[Lemme~1.8.1]{deligne_weil2},
\cite[Lemma~I~2.5]{kiehl-weissauer_weil-conjectures-perverse-sheaves}),
but we include a proof for clarity and completeness.
We first recall the action of geometric Frobenius on
\(\overline{\mathbf{Q}}_{\ell}\)-sheaves and the L-function formalism.

\begin{definition}
Let \(X_{0}\) be a separated scheme of finite type over
\(\mathbf{F}_{q}\).  Let \(\mathcal{F}_{0}\) be a
\(\overline{\mathbf{Q}}_{\ell}\)-sheaf on \(X_{0}\).  For any point
\(x \in X_{0}(\mathbf{F}_{q^{e}})\), we get a morphism
\[
\overline{x}\colon \operatorname{Spec}\overline{\mathbf{F}}_{q} \to \operatorname{Spec}\mathbf{F}_{q^e} \xrightarrow{x} X_{0}.
\]
Then the \emph{inverse} of the Frobenius automorphism of
\(\mathrm{Gal}(\overline{\mathbf{F}}_{q}/\mathbf{F}_{q^{e}})\), called
the \emph{geometric Frobenius}, acts
on the geometric stalk \(\mathcal{F}_{0\overline{x}}\).  We denote this action by
\[
F_{x} \colon \mathcal{F}_{0 \overline{x}} \to \mathcal{F}_{0 \overline{x}}.
\]
\end{definition}

\begin{definition}
Let \(X_0\) be a separated scheme of finite type over \(\mathbf{F}_q\).
Let \(\mathcal{F}_0\) be a \(\overline{\mathbf{Q}}_{\ell}\)-sheaf on \(X_0\).
Then the L-function of \(\mathcal{F}_{0}\) is
\[
L(\mathcal{F}_0,t) =
\exp\left( \sum_{e\geq1}
\left(\sum_{x\in X_0(\mathbf{F}_{q^e})} \operatorname{Tr}(F_x,\mathcal{F}_{0\overline{x}}) \right) \frac{t^e}{e} \right).
\]
By Grothendieck's trace formula, we have
\[
L(\mathcal{F}_0,t) = \prod_i \det(1-tF,\mathrm{H}^i_c(X;\mathcal{F}))^{(-1)^{i-1}}.
\]
\end{definition}

\begin{definition}[Pointwise weight of a sheaf]
In the situation above, we say \(\mathcal{F}_{0}\) is of
\emph{pointwise weight} \(\leq w\), if for any \(e\geq 1\), any point
\(x \in X_{0}(\mathbf{F}_{q^{e}})\), the eigenvalues of
\(F_{x}\colon \mathcal{F}_{0\overline{x}} \to
\mathcal{F}_{0\overline{x}}\) have \(q^{e}\)-weight \(\leq w\).
\end{definition}

\begin{remark}
Suppose \(x\) is an \(\mathbf{F}_{q^e}\)-valued point and \(y\) is an
\(\mathbf{F}_{q^{e^{\prime}}}\)-valued point lying over \(x\), that is,
suppose we have a commutative diagram
\[
\begin{tikzcd}
\operatorname{Spec}\overline{\mathbf{F}}_{q} \ar[r] &
\operatorname{Spec}\mathbf{F}_{q^{e^{\prime}}} \ar[r] \ar[rr,"y",bend left=30] & \operatorname{Spec}\mathbf{F}_{q^e} \ar[r,"x"] & X_{0}
\end{tikzcd}.
\]
Then \(e\) divides \(e^{\prime}\), and, as
\(\overline{\mathbf{Q}}_{\ell}\)-linear maps on
\(\mathcal{F}_{0 \overline{x}} = \mathcal{F}_{0 \overline{y}}\), we
have \(F_x^{e^{\prime}/e} = F_y\).  For this reason,
eigenvalues of \(F_x\) all have \(q^e\)-weight \(\leq w\) if and only
if eigenvalues of \(F_y\) have \(q^{e^{\prime}}\)-weight \(\leq w\).
\end{remark}

\begin{lemma}[Trivial bounds~{\cite[Proposition~1.4.6]{deligne_weil2}}]
\label{lemma:simple}
Let \(X_0\) be a separated scheme of finite type over
\(\mathbf{F}_{q}\) of dimension \(\leq d\).  Suppose
\(\mathcal{F}_{0}\) is a \(\overline{\mathbf{Q}}_{\ell}\)-sheaf on
\(X_{0}\) of pointwise weight \(\leq w\).  Then for any isomorphism
\(\iota\colon\overline{\mathbf{Q}}_{\ell}\xrightarrow{\sim}\mathbf{C}\),
the L-function \(\iota L(\mathcal{F}_0,t)\) is convergent for all
\(|t| < q^{-w/2-d}\), and has neither zeros nor poles in this region.
\end{lemma}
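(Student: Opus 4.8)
The plan is to work directly with the defining power series of the L-function: write
\[
S_e = \sum_{x\in X_0(\mathbf{F}_{q^e})} \operatorname{Tr}(F_x,\mathcal{F}_{0\overline{x}}),
\qquad
\log L(\mathcal{F}_0,t) = \sum_{e\geq 1} S_e\,\frac{t^e}{e}\ \in\ t\,\overline{\mathbf{Q}}_\ell[\![t]\!].
\]
Since \(\iota\) is a ring homomorphism and the formal exponential is \(t\)-adically continuous, \(\iota L(\mathcal{F}_0,t) = \exp\bigl(\sum_{e\geq 1}\iota(S_e)\,t^e/e\bigr)\) as formal power series; so it suffices to show that the series \(g(t) := \sum_{e\geq 1}\iota(S_e)\,t^e/e\) converges absolutely on the disc \(|t| < q^{-w/2-d}\), for then \(\iota L(\mathcal{F}_0,t) = \exp(g(t))\) is holomorphic and nowhere vanishing there.

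Next I would bound \(|\iota(S_e)|\) by two elementary estimates. First, a constructible \(\overline{\mathbf{Q}}_\ell\)-sheaf has stalks of bounded dimension, say \(\dim\mathcal{F}_{0\overline{x}}\leq r\) for all \(x\); combined with the pointwise-weight hypothesis — every eigenvalue \(\alpha\) of \(F_x\) on \(\mathcal{F}_{0\overline{x}}\) satisfies \(|\iota(\alpha)|\leq (q^e)^{w/2}\) for \(x\in X_0(\mathbf{F}_{q^e})\) — this gives \(|\iota\operatorname{Tr}(F_x,\mathcal{F}_{0\overline{x}})|\leq r\,q^{ew/2}\). Second, the crude point count: since \(\dim X_0\leq d\), there is a constant \(C\), independent of \(e\), with \(\#X_0(\mathbf{F}_{q^e})\leq C\,q^{ed}\). (This needs no input from the Weil conjectures: stratify \(X_0\) into affines, embed each into an affine space, and use Noether normalization — a finite surjection onto \(\mathbf{A}^d\) — to count points.) Multiplying the two bounds over the \(\#X_0(\mathbf{F}_{q^e})\) points yields \(|\iota(S_e)|\leq Cr\,q^{e(d+w/2)}\) for every \(e\geq 1\).

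Finally, set \(\rho = q^{d+w/2}\). For \(|t| < \rho^{-1} = q^{-w/2-d}\) we have
\[
\sum_{e\geq 1}\frac{|\iota(S_e)|}{e}\,|t|^e
\ \leq\ Cr\sum_{e\geq 1}\frac{(\rho|t|)^e}{e}
\ =\ -\,Cr\,\log\!\bigl(1-\rho|t|\bigr)\ <\ \infty,
\]
so \(g(t)\) converges absolutely and is holomorphic on \(|t| < q^{-w/2-d}\); hence \(\iota L(\mathcal{F}_0,t)=\exp(g(t))\) is holomorphic and nonvanishing on that disc, as claimed. The only step with any real content is the crude bound \(\#X_0(\mathbf{F}_{q^e})=O(q^{ed})\); I expect this to be the main obstacle only in the sense that it must be spelled out carefully (especially over a finite base field), not in the sense of genuine difficulty, since everything else is a formal manipulation of the power series together with the weight hypothesis.
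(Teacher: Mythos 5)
Your proof is correct and follows essentially the same route as the paper: both establish the crude point count $\#X_0(\mathbf{F}_{q^e})=O(q^{ed})$ via a finite map to affine $d$-space, combine it with the bound $|\iota\operatorname{Tr}(F_x)|\le r\,q^{ew/2}$ coming from the stalk-rank bound and the pointwise-weight hypothesis, and then conclude convergence and nonvanishing by exponentiating a dominated series. The paper phrases the final estimate in terms of the logarithmic derivative $L'/L$ rather than $\log L$ directly, but this is an inessential repackaging of the same majorant.
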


\begin{proof}
Without loss of generality we can assume \(X_0\) is affine,
geometrically reduced, geometrically irreducible, and has dimension
equal to \(d\).  After passing to a finite extension, we can find a
finite morphism \(X_0 \to \mathbf{A}^{d}_{\mathbf{F}_{q}}\).  Then
there is a constant \(C\) independent of \(e\), such that
\[
\# X_0(\mathbf{F}_{q^e}) \leq  C (q^{e})^{d}, \quad e = 1,2,\ldots.
\]
Let
\(r=\max\{\dim_{\overline{\mathbf{Q}}_{\ell}}\mathcal{F}_{\overline{x}}\}\)
where \(\overline{x}\) runs through all geometric points of \(X_{0}\).
Then for any closed point \(x\) of \(X_0\), we have
\[
|\iota \mathrm{Tr}(F_x, \mathcal{F}_{\overline{x}})| \leq r \cdot (q^{\deg x})^{\frac{w}{2}}.
\]
Therefore, the series
\[
\iota \left(\frac{L^{\prime}(\mathcal{F}_0,t)}{L(\mathcal{F}_0,t)}\right)
= \sum_{e\geq 1} \sum_{x \in X_0(\mathbf{F}_{q^e})} \iota \mathrm{Tr}(F_{x}, \mathcal{F}_{\overline{x}}) \cdot t^{e-1}
\]
has a majorant
\(rC \cdot \sum_{e\geq 1} (q^{d+\frac{w}{2}})^{e}\cdot |t|^{e-1}\).  Hence,
\(\iota \left( L^{\prime}(\mathcal{F}_0,t) / L(\mathcal{F}_0,t)\right)\)
is convergent on the disk \(|t| < q^{-w/2-d}\), and so is
\(\log L(\mathcal{F}_0,t)\).  The lemma then follows after taking
exponentiation.
\end{proof}

\begin{lemma}\label{lemma:simpler}
Let \(U_{0}\) be a smooth, geometrically connected, affine curve over
\(\mathbf{F}_{q}\).  Let \(\mathcal{E}_{0}\) be a lisse
\(\overline{\mathbf{Q}}_{\ell}\)-sheaf on \(U_{0}\) of pointwise
weight \(\leq w\).  Then for any reciprocal zero \(\alpha\) of
\(\det(1-tF, \mathrm{H}^{1}_{c}(U;\mathcal{E}))\), \(\alpha\) has
\(q\)-weight \(\leq w+2\).
\end{lemma}

\begin{proof}
By Grothendieck's trace formula, we have
\[
L(\mathcal{E}_{0},t)
= \frac{\det(1-tF, \mathrm{H}^{1}_{c}(U;\mathcal{E}))}
{\det(1-tF, \mathrm{H}^{2}_{c}(U;\mathcal{E}))}.
\]
If \(\alpha\) is not a reciprocal zero of
\(\det(1-tF, \mathrm{H}^{2}_{c}(U;\mathcal{E}))\), then \(\alpha\) is
a reciprocal zero of \(L(\mathcal{E}_{0},t)\).  In this case, the result
follows from Lemma~\ref{lemma:simple}.

It remains to consider the case in which \(\alpha\) is a reciprocal zero
of \(\det(1-tF, \mathrm{H}^{2}_{c}(U;\mathcal{E}))\).  Let \(x\) be a
closed point of \(U_{0}\).  Then
\[
\mathrm{H}^{2}_{c}(U;\mathcal{E}) \simeq
(\mathcal{E}_{\overline{x}}\otimes
\overline{\mathbf{Q}}_{\ell}(-1))_{\pi_{1}(U,\overline{x})},
\]
the monodromy coinvariant part of
\(\mathcal{E}_{\overline{x}}\otimes
\overline{\mathbf{Q}}_{\ell}(-1)\); see~\cite[(2.9)]{deligne_weil1}.
Thus \(\alpha^{\deg x}\) is an eigenvalue of \(F_{x}\) acting on
\(\mathcal{E}_{\overline{x}}\otimes
\overline{\mathbf{Q}}_{\ell}(-1)\).  By the pointwise weight hypothesis,
we have
\[
|\iota(\alpha^{\deg x})| \leq (q^{\deg x})^{1+\frac{w}{2}}.
\]
Taking roots yields the desired result.
\end{proof}

\begin{lemma}[Semicontinuity of weights]\label{lemma:semicont}
Let \(C_{0}\) be a nonsingular geometrically connected curve over
\(\mathbf{F}_{q}\).  Let
\(\mathcal{E}_{0}\) be a lisse sheaf on a Zariski open subset
\(j\colon U_{0} \hookrightarrow C_{0}\).  Suppose \(\mathcal{E}_{0}\) has
pointwise weight \(\leq w\).  Then \(\mathrm{R}^{0}j_{\ast}\mathcal{E}_{0}\)
has pointwise weight \(\leq w\) as well.
\end{lemma}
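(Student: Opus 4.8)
The plan is to follow Deligne's argument for this lemma (Weil~II, Lemme~1.8.1), for which Lemma~\ref{lemma:simple} is essentially the only substantial input. If a closed point $s$ of $C_0$ lies in $U_0$, then the stalk of $\mathrm{R}^0 j_{\ast}\mathcal{E}_0$ at $s$ is the corresponding stalk of $\mathcal{E}_0$, so the assertion there is the hypothesis; fix then a closed point $s\in C_0\setminus U_0$. The stalk $(\mathrm{R}^0 j_{\ast}\mathcal{E}_0)_{\overline{s}}$, with its action of $F_s$, depends only on the restriction of $\mathcal{E}_0$ to a punctured étale neighbourhood of $s$, and $\mathrm{R}^0 j_{\ast}$ commutes with base-field extension, which preserves pointwise weights. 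So, shrinking $C_0$ to an affine open neighbourhood of $s$ meeting $C_0\setminus U_0$ only at $s$ and then extending scalars, I may assume: $C_0$ is a smooth affine geometrically connected curve, $s\in C_0(\mathbf{F}_q)$, $U_0=C_0\setminus\{s\}$, with closed immersion $i\colon\{s\}\hookrightarrow C_0$. Put $M:=(\mathrm{R}^0 j_{\ast}\mathcal{E}_0)_{\overline{s}}$, the space of local inertia invariants, with its $F_s$-action; the goal becomes that every eigenvalue $\alpha$ of $F_s$ on $M$ satisfies $|\iota(\alpha)|\leq q^{w/2}$.

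First I establish a weaker statement: in the reduced situation, but with $\mathcal{E}_0$ merely of pointwise weight $\leq w'$ for an arbitrary real $w'$, every eigenvalue of $F_s$ on $M$ has $|\iota(\cdot)|\leq q^{w'/2+1}$. Taking compactly supported cohomology of the excision triangle $j_!\mathcal{E}_0\to j_{\ast}\mathcal{E}_0\to i_{\ast}M$, and using that $\mathrm{H}^0_c(U;\mathcal{E})=0$ (a local system on a connected affine curve has no nonzero section with proper support) and that $j_{\ast}\mathcal{E}_0$ has no nonzero subsheaf supported at $s$ (whence $\mathrm{H}^0_c(C;j_{\ast}\mathcal{E})=0$ as well), the long exact sequence yields a short exact sequence of $\overline{\mathbf{Q}}_{\ell}$-vector spaces with Frobenius action
\[
0\to M\to \mathrm{H}^1_c(U;\mathcal{E})\to \mathrm{H}^1_c(C;j_{\ast}\mathcal{E})\to 0 .
\]
So $\det(1-tF,M)$ divides $N(t):=\det(1-tF,\mathrm{H}^1_c(U;\mathcal{E}))$. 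By Grothendieck's trace formula, using $\mathrm{H}^0_c(U;\mathcal{E})=0$ again, one has $L(\mathcal{E}_0,t)=N(t)/D(t)$ with $D(t):=\det(1-tF,\mathrm{H}^2_c(U;\mathcal{E}))$. By Lemma~\ref{lemma:simple} applied to $\mathcal{E}_0$ on the curve $U_0$, $\iota L(\mathcal{E}_0,t)$ has no zero or pole for $|t|<q^{-w'/2-1}$. Independently, by Poincaré duality on the smooth curve $U$, $\mathrm{H}^2_c(U;\mathcal{E})$ is the $(-1)$-Tate twist of the stalk of the maximal geometrically constant quotient $\mathcal{Q}_0$ of $\mathcal{E}_0$; as a quotient of $\mathcal{E}_0$ it has pointwise weight $\leq w'$, and being geometrically constant its Frobenius eigenvalues $\mu$ satisfy $|\iota(\mu)|\leq q^{w'/2}$, so the eigenvalues of $F$ on $\mathrm{H}^2_c(U;\mathcal{E})$, namely the $q\mu$, have $|\iota(\cdot)|\leq q^{w'/2+1}$; thus $D(t)$ also has no zero for $|t|<q^{-w'/2-1}$. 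Consequently $\iota N(t)=\iota L(\mathcal{E}_0,t)\cdot\iota D(t)$ has no zero there, and neither does its factor $\det(1-tF,M)$ — which is exactly the claimed bound.

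Now I amplify away the loss of $2$. For each $n\geq 1$, apply this bound to $\mathcal{E}_0^{\otimes n}$, a local system on $U_0$ of pointwise weight $\leq nw$: every eigenvalue of $F_s$ on $(\mathrm{R}^0 j_{\ast}\mathcal{E}_0^{\otimes n})_{\overline{s}}$ has $|\iota(\cdot)|\leq q^{nw/2+1}$. If $\alpha$ is an eigenvalue of $F_s$ on $M$ with eigenvector $v$, then $v^{\otimes n}$ lies in $M^{\otimes n}\subset (\mathrm{R}^0 j_{\ast}\mathcal{E}_0^{\otimes n})_{\overline{s}}$ and is an eigenvector with eigenvalue $\alpha^n$; hence $|\iota(\alpha)|^n\leq q^{nw/2+1}$, i.e.\ $|\iota(\alpha)|\leq q^{w/2+1/n}$, and letting $n\to\infty$ gives $|\iota(\alpha)|\leq q^{w/2}$, completing the proof.

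I expect the crux to be the pole factor $D(t)=\det(1-tF,\mathrm{H}^2_c(U;\mathcal{E}))$. Since Lemma~\ref{lemma:simple} controls only $L(\mathcal{E}_0,t)$ itself — not its numerator and denominator separately — a zero of $N(t)$ could in principle be cancelled by a pole of $L$, so one must bound the Frobenius action on $\mathrm{H}^2_c(U;\mathcal{E})$ by other means. This is possible because $\mathrm{H}^2_c$ detects only the geometrically constant quotient of $\mathcal{E}_0$, but it nonetheless costs one power of $q$; combined with the fact that the disc afforded by Lemma~\ref{lemma:simple} on a curve is only $|t|<q^{-w'/2-1}$, this is responsible for the loss of weight $2$, which is precisely why the tensor-power amplification at the end is indispensable.
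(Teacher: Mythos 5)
Your proof is correct and follows essentially the same route as the paper's: reduce to one boundary point of degree one, compare the $L$-functions of $\mathcal{E}_0$ on $U_0$ and of $\mathrm{R}^0 j_\ast\mathcal{E}_0$ on $C_0$ (equivalently, use the excision sequence and trace formula), bound the local factor via the zero-free region from Lemma~\ref{lemma:simple}, and then kill the additive loss by tensor-power amplification. One place where you are more careful than the paper: you explicitly rule out zeros of $D(t)=\det(1-tF,\mathrm{H}^2_c(U;\mathcal{E}))$ in the disc $|t|<q^{-w'/2-1}$ via Poincaré duality and the geometrically constant quotient, which is needed to deduce that a zero of $\prod\det(1-tF_x,M_x)$ in that disc would force a zero of $L(\mathcal{E}_0,t)$ rather than being absorbed by the denominator. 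The paper's phrase ``must be cancelled by zeros of $L(\mathcal{E}_0,t)$'' elides exactly this point; your treatment of the $\mathrm{H}^2_c$ term closes that step cleanly.
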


\begin{proof}[Proof \textup{(cf.~{\cite[Lemme~1.8.1]{deligne_weil2}, \cite[Lemma~I~2.5]{kiehl-weissauer_weil-conjectures-perverse-sheaves}})}]
Without loss of generality, we may assume that \(C_{0}\) is affine and
\(C_0 \setminus U_0\) consists of \(\mathbf{F}_q\)-rational
points.  In this case, we have
\(\mathrm{H}^{0}_{c}(C;\mathrm{R}^{0}j_{\ast}\mathcal{E})=0\).  This can be
seen, for example, from Artin's vanishing theorem
(Theorem~\ref{theorem:relative-affine-lefschetz}, with
\(S = \operatorname{Spec}\overline{\mathbf{F}}_q\)) and the perversity
of \(\mathrm{R}^{0}j_{\ast} \mathcal{E}[1]\).

Consider the exact sequence
\begin{equation}\label{eq:semic-0}
0 \to j_{!}\mathcal{E}_{0} \to \mathrm{R}^{0}j_{\ast}\mathcal{E}_{0} \to \mathcal{G}_{0} \to 0
\end{equation}
where \(\mathcal{G}_{0}\) is a sheaf supported on the finite set
\(C_{0}\setminus U_{0}\).  For any point \(x\) in
\(C_{0} \setminus U_{0}\), we have
\(\mathcal{G}_{0\overline{x}}=(\mathrm{R}^{0}j_{\ast}\mathcal{E}_{0})_{\overline{x}}\).
Applying the multiplicativity of \(L\)-functions to
\eqref{eq:semic-0}, we have
\begin{equation}\label{eq:semic-1}
L(\mathrm{R}^{0}j_{\ast}\mathcal{E}_{0},t)
= L(\mathcal{E}_{0},t) \cdot \prod_{x \in C_{0}\setminus U_{0}} \det(1 - tF_{x}, (\mathrm{R}^{0}j_{\ast}\mathcal{E}_{0})_{\overline{x}})^{-1}.
\end{equation}

On the other hand, taking compactly supported cohomology in
\eqref{eq:semic-0} and using that
\(\mathrm{H}^{1}_{c}(C;\mathcal{G}) =
\mathrm{H}^{2}_{c}(C;\mathcal{G})=0\), we find that
\begin{equation*}
\mathrm{H}^{2}_{c}(C;\mathrm{R}^{0}j_{\ast}\mathcal{E}) \simeq \mathrm{H}^{2}_{c}(U;\mathcal{E}).
\end{equation*}
Therefore, Grothendieck's trace formula gives
\begin{equation}
\label{eq:semic-2}
L(\mathrm{R}^{0}j_{\ast}\mathcal{E}_{0},t) = \frac{\det(1-tF, \mathrm{H}^{1}_{c}(C;\mathrm{R}^{0}j_{\ast}\mathcal{E}))}{\det(1-tF, \mathrm{H}^{2}_{c}(U;\mathcal{E}))}.
\end{equation}

Combining \eqref{eq:semic-1} and \eqref{eq:semic-2}, we get
\[
\frac{\det(1-tF, \mathrm{H}^{1}_{c}(U; \mathcal{E}))}{\det(1-tF, \mathrm{H}^{2}_{c}(U;\mathcal{E}))}
\cdot \prod_{x\in C_{0}\setminus U_{0}} \det(1-tF_{x}, (\mathrm{R}^{0}j_{\ast}\mathcal{E}_{0})_{\overline{x}})^{-1}
= \frac{\det(1-tF, \mathrm{H}^{1}_{c}(C;\mathrm{R}^{0}j_{\ast}\mathcal{E}))}{\det(1-tF, \mathrm{H}^{2}_{c}(U;\mathcal{E}))}.
\]
Clearing the common denominator
\(\det(1-tF, \mathrm{H}^{2}_{c}(U;\mathcal{E}))\),
we get
\[
\det(1-tF, \mathrm{H}^{1}_{c}(U;\mathcal{E})) =
\det(1-tF, \mathrm{H}^{1}_{c}(C;\mathrm{R}^{0}j_{\ast}\mathcal{E})) \cdot \prod_{x \in C_{0}\setminus U_{0}} \det(1-tF_{x}, (\mathrm{R}^{0}j_{\ast}\mathcal{E}_{0})_{\overline{x}}).
\]
In particular, for any \(x \in C_{0}\setminus U_{0}\), we have
\[
\det(1-tF_{x},(\mathrm{R}^{0}j_{\ast}\mathcal{E}_{0})_{\overline{x}})
\quad \text{divides} \quad
\det(1-tF, \mathrm{H}^{1}_{c}(U;\mathcal{E})).
\]
By
Lemma~\ref{lemma:simpler} above, the Frobenius eigenvalues
\(\alpha\) of \((\mathrm{R}^{0}j_{\ast}\mathcal{E}_{0})_{\overline{x}}\) must
satisfy \(|\iota \alpha| \leq q^{\frac{w+2}{2}}\).

We can repeat the same argument with \(\mathcal{E}_{0}\) replaced by
\(\mathcal{E}_{0}^{\otimes m}\).  Since
\(\mathrm{R}^{0}j_{\ast}(\mathcal{E}_{0}^{\otimes m})\) contains
\((\mathrm{R}^{0}j_{\ast} \mathcal{E}_{0})^{\otimes m}\) as a
subsheaf, and the Frobenius eigenvalues of the latter are the
\(m\)\textsuperscript{th} powers of those of
\(\mathrm{R}^{0}j_{\ast}\mathcal{E}_{0}\), we find that the Frobenius
eigenvalues \(\alpha\) of \(\mathrm{R}^{0}j_{\ast}\mathcal{E}_{0}\)
satisfy
\begin{equation*}
|\iota \alpha^{m}| \leq q^{\frac{wm+2}{2}}, \quad\text{or}\quad |\iota\alpha| \leq q^{\frac{w}{2}+\frac{1}{m}}.
\end{equation*}
Letting \(m \to \infty\) concludes the proof.
\end{proof}

\section{Proof of Theorem~\ref*{theorem:wrh}}
\label{sec:proof}

We proceed by induction on the dimension \(n\) of \(X_{0}\).  We also
remark that the theorem is insensitive to the choice of base field, so
we may replace \(\mathbf{F}_q\) by a finite extension as needed in the
argument.

\begin{situation}[Reduction to the case of an affine hypersurface]
\label{sec:reduction-to-hypersurface}
After replacing \(\mathbf{F}_q\) by a suitable finite extension, we
may assume that all the geometric irreducible components of \(X_{0}\)
are defined over \(\mathbf{F}_q\).  By induction on the number of
geometric irreducible components, we may assume that \(X_{0}\) is
itself geometrically irreducible of pure dimension \(n\).
Furthermore, we can find an open dense subscheme
\(U_{0} \subset X_{0}\) which is isomorphic to an open dense subscheme
of a hypersurface \(Y_{0}\) in affine space
\(\mathbf{A}^{n+1}_{\mathbf{F}_q}\).  Consider the exact sequence
\[
\mathrm{H}^{i}_{c}(U;\overline{\mathbf{Q}}_{\ell}) \to \mathrm{H}^{i}_c(X;\overline{\mathbf{Q}}_{\ell}) \to \mathrm{H}^{i}_{c}(X \setminus U;\overline{\mathbf{Q}}_{\ell}) ,
\]
and observe that the complement \(X \setminus U\) has dimension at
most \(n-1\).  By the induction hypothesis, it suffices to prove the
theorem for \(U_{0}\).  Let \(Z_{0} = Y_{0} \setminus U_{0}\);
clearly, \(\dim Z_{0} \leq n-1\).  The following exact sequence,
\[
\mathrm{H}^{i-1}_c(Z;\overline{\mathbf{Q}}_{\ell}) \to \mathrm{H}^{i}_{c}(U;\overline{\mathbf{Q}}_{\ell}) \to \mathrm{H}^{i}_{c}(Y;\overline{\mathbf{Q}}_{\ell}),
\]
shows that it is further sufficient to prove the theorem for the affine
hypersurface \(Y_0\).  Thus, after these reductions, we may assume
\(X_{0}\) is a hypersurface in \(\mathbf{A}^{n+1}_{\mathbf{F}_q}\),
defined by a single polynomial
\(f \in \mathbf{F}_q[T_1,\ldots,T_{n+1}]\).

If the projective closure
of \(X_0\) is smooth, then Theorem~\ref{theorem:wrh} follows from
Katz's theorem (Theorem~\ref{theorem:katz}) directly:

\begin{lemma}\label{lemma:hype}
Assume that Theorem~\ref{theorem:wrh} holds for all separated schemes
of finite type over \(\mathbf{F}_q\) of dimension at most \(n-1\).
Let \(X_0 \subset \mathbf{A}^{n+1}_{\mathbf{F}_q}\) be a hypersurface
such that its projective closure is a smooth hypersurface in
\(\mathbf{P}^{n+1}_{\mathbf{F}_q}\).  Then Theorem~\ref{theorem:wrh}
holds for \(X_0\).
\end{lemma}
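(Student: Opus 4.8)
The plan is a direct excision argument comparing $X_0$ with its smooth projective closure and with its part at infinity. Write $\mathbf{A}^{n+1}_{\mathbf{F}_q} = \mathbf{P}^{n+1}_{\mathbf{F}_q} \setminus H_0$ for the hyperplane at infinity $H_0 \cong \mathbf{P}^{n}_{\mathbf{F}_q}$, let $\overline{X}_0 \subset \mathbf{P}^{n+1}_{\mathbf{F}_q}$ be the projective closure of $X_0$, which by hypothesis is a smooth hypersurface, hence of pure dimension $n$, and set $W_0 = \overline{X}_0 \cap H_0$. Since $\overline{X}_0$ is cut out by the homogenization of the defining polynomial $f$ of $X_0$, the scheme $W_0$ is cut out in $H_0 \cong \mathbf{P}^{n}$ by the top-degree homogeneous part of $f$, which is a nonzero form (as $f$ is nonconstant); hence $\dim W_0 \leq n-1$. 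Moreover $W_0$ and $\overline{X}_0$, being closed in projective space, are proper over $\mathbf{F}_q$, so $\mathrm{H}^{\bullet}_c(W;\overline{\mathbf{Q}}_\ell) = \mathrm{H}^{\bullet}(W;\overline{\mathbf{Q}}_\ell)$ and $\mathrm{H}^{\bullet}_c(\overline{X};\overline{\mathbf{Q}}_\ell) = \mathrm{H}^{\bullet}(\overline{X};\overline{\mathbf{Q}}_\ell)$.

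Next I would invoke the excision long exact sequence for the open immersion $X_0 \hookrightarrow \overline{X}_0$ with closed complement $W_0$, which furnishes a Frobenius-equivariant exact sequence
\[
\mathrm{H}^{i-1}_c(W;\overline{\mathbf{Q}}_\ell) \to \mathrm{H}^{i}_c(X;\overline{\mathbf{Q}}_\ell) \to \mathrm{H}^{i}_c(\overline{X};\overline{\mathbf{Q}}_\ell).
\]
It follows that $\mathrm{H}^{i}_c(X;\overline{\mathbf{Q}}_\ell)$ is a Frobenius-equivariant extension of a submodule of $\mathrm{H}^{i}(\overline{X};\overline{\mathbf{Q}}_\ell)$ by a quotient of $\mathrm{H}^{i-1}(W;\overline{\mathbf{Q}}_\ell)$, so every Frobenius eigenvalue on $\mathrm{H}^{i}_c(X;\overline{\mathbf{Q}}_\ell)$ is a Frobenius eigenvalue on $\mathrm{H}^{i-1}(W;\overline{\mathbf{Q}}_\ell)$ or on $\mathrm{H}^{i}(\overline{X};\overline{\mathbf{Q}}_\ell)$. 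It therefore suffices to bound the $q$-weights on these two spaces by $i$.

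For $\mathrm{H}^{i}(\overline{X};\overline{\mathbf{Q}}_\ell)$ this is Katz's theorem (Theorem~\ref{theorem:katz}) applied to the smooth projective hypersurface $\overline{X}_0 \subset \mathbf{P}^{n+1}_{\mathbf{F}_q}$: every Frobenius eigenvalue there has $q$-weight exactly $i$, in particular $\leq i$. For $\mathrm{H}^{i-1}(W;\overline{\mathbf{Q}}_\ell)$ I use the inductive hypothesis: since $\dim W_0 \leq n-1$, Theorem~\ref{theorem:wrh} holds for $W_0$, so every Frobenius eigenvalue on $\mathrm{H}^{i-1}_c(W;\overline{\mathbf{Q}}_\ell) = \mathrm{H}^{i-1}(W;\overline{\mathbf{Q}}_\ell)$ has $q$-weight $\leq i-1 \leq i$. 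Combining the two bounds, all Frobenius eigenvalues on $\mathrm{H}^{i}_c(X;\overline{\mathbf{Q}}_\ell)$ have $q$-weight $\leq i$, which is precisely Theorem~\ref{theorem:wrh} for $X_0$. I do not expect a genuine obstacle here — this is exactly the situation designed so that Katz's theorem applies almost immediately — the only points requiring a moment's attention being the elementary dimension count $\dim W_0 \leq n-1$, which keeps $W_0$ within reach of the inductive hypothesis, and the properness of $W_0$ and $\overline{X}_0$, which lets us pass freely between compactly supported and ordinary cohomology.
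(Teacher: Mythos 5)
Your proof is correct and follows essentially the same route as the paper: compare $X_0$ with its smooth projective closure $\overline{X}_0$ via the excision long exact sequence, apply Katz's theorem (Theorem~\ref{theorem:katz}) to $\overline{X}_0$, and apply the inductive hypothesis to the boundary $\overline{X}_0 \setminus X_0$ of dimension $\leq n-1$. Your write-up is somewhat more detailed than the paper's (which records only the degree-$n$ instance of the sequence and leaves the dimension count implicit), but the argument is the same.
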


\begin{proof}
Observe that the complement \(\overline{X}_0 \setminus X_0\) has
dimension at most \(n-1\).  By the inductive hypothesis and
Theorem~\ref{theorem:katz}, the exact sequence
\[
\mathrm{H}_{c}^{i-1}(\overline{X} \setminus X; \overline{\mathbf{Q}}_{\ell})
\to \mathrm{H}^{i}_c(X;\overline{\mathbf{Q}}_{\ell}) \to
\mathrm{H}^{i}(\overline{X};\overline{\mathbf{Q}}_{\ell})
\]
shows that Theorem~\ref{theorem:wrh} holds for \(X_0\).
\end{proof}

In our reduction of Theorem~\ref{theorem:wrh} to the case of affine
hypersurfaces, we have no control over the singularities of the
resulting hypersurface or of its projective closure.  As a result, to
complete the proof, we require an additional degeneration argument to
adequately address these potential singularities.
\end{situation}

\begin{situation}[Singular affine hypersurface: weight of middle cohomology]
\label{sec:middle-singular-hypersurface}
For a not necessarily irreducible \(n\)-dimensional hypersurface
\(X_{0} \subset \mathbf{A}_{\mathbf{F}_{q}}^{n+1}\) defined by a degree \(d\) polynomial
\(f \in \mathbf{F}_{q}[T_{1},\ldots,T_{n+1}]\), we now prove that the
Frobenius eigenvalues of
\(\mathrm{H}^{n}_{c}(X;\overline{\mathbf{Q}}_{\ell})\) are
of \(q\)-weight at most \(n\).

After possibly extending the base field, we can find a polynomial
\(g \in \mathbf{F}_q[T_1, \ldots, T_{n+1}]\) such that:
\begin{itemize}
\item \(\deg f = \deg g = d\),
\item the affine hypersurface \(W_0\) defined by \(g\) is smooth; and
\item the degree-\(d\) homogenization
\(T^d_{0} g(T_1 T_0^{-1},\ldots, T_{n+1}T_0^{-1})\) of \(g\) cuts out a
smooth projective hypersurface in \(\mathbf{P}^{n+1}_{\mathbf{F}_q}\).
\end{itemize}

Next, we introduce a family which interpolates between our original
hypersurface \(X_0\) and this auxiliary smooth hypersurface \(W_0\).
Consider
\[
\mathscr{X}_0 \subset \mathbf{A}^1_{\mathbf{F}_q} \times \mathbf{A}^{n+1}_{\mathbf{F}_q}
\]
defined by the equation
\[
t g(T_1, \ldots, T_{n+1}) + (1 - t) f(T_1, \ldots, T_{n+1}) = 0.
\]

Let \(\mathcal{H}_0\) denote the affine space parametrizing
polynomials in \(T_1, \dots, T_{n+1}\) of degree at most
\(d = \deg f\).  Within \(\mathcal{H}_0\), the locus \(\mathcal{U}_0\)
of polynomials whose degree-\(d\) homogenizations cut out smooth
hypersurfaces in \(\mathbf{P}^{n+1}\) is an open dense subscheme.  The
pencil \(t g + (1-t) f\) induces a morphism
\(u \colon \mathbf{A}^1_{\mathbf{F}_q} \to \mathcal{H}_0\).  Since
\(u(1) \in \mathcal{U}_0\), the inverse image
\(u^{-1}(\mathcal{U}_0)\) is a nonempty open subscheme of
\(\mathbf{A}^1_{\mathbf{F}_q}\), and hence dense.  Consequently, for
all but finitely many \(t \in \overline{\mathbf{F}}_q\), the
polynomial \(tg + (1-t)f\) defines a smooth hypersurface in
\(\mathbf{A}^{n+1}_{\overline{\mathbf{F}}_q}\), and its degree-\(d\)
homogenization cuts out a smooth hypersurface in
\(\mathbf{P}^{n+1}_{\overline{\mathbf{F}}_q}\).

Since
\(\mathbf{A}^1_{\mathbf{F}_q} \times \mathbf{A}^{n+1}_{\mathbf{F}_q}\)
is smooth, Lemma~\ref{lemma:perversity-local-system-on-smooth-variety}
shows that the shifted constant sheaf
\[
\overline{\mathbf{Q}}_{\ell,\mathbf{A}^1_{\mathbf{F}_q} \times
  \mathbf{A}^{n+1}_{\mathbf{F}_q}}[n+2]
\]
is perverse.  Hence, by
Lemma~\ref{lemma:local-equation-and-perverse-coconnective}, we see
that \(\overline{\mathbf{Q}}_{\ell,\mathscr{X}_0}[n+1]\) is perverse
coconnective, as \(\mathscr{X}_0\) is defined locally by a single
regular function.

Consider the projection
\(\pi\colon\mathscr{X}_0 \to \mathbf{A}^1_{\mathbf{F}_q}\).  This is an
affine morphism.  So, by Artin's vanishing theorem
(Theorem~\ref{theorem:relative-affine-lefschetz}), the complex
\(\mathrm{R}\pi_!(\overline{\mathbf{Q}}_{\ell,\mathscr{X}_0}[n+1])\) is
perverse coconnective on \(\mathbf{A}^1_{\mathbf{F}_q}\).

Choose an open embedding
\(j\colon V_0 \hookrightarrow \mathbf{A}^1_{\mathbf{F}_q}\)
of a dense Zariski open subset such that
\begin{itemize}
\item for all \(m\),
the cohomology sheaves \(\mathrm{R}^m \pi_!(\overline{\mathbf{Q}}_{\ell,\mathscr{X}_0})\) of
\(\mathrm{R}\pi_!(\overline{\mathbf{Q}}_{\ell,\mathscr{X}_0})\) are lisse on
\(V_0\), and
\item for every geometric point \(t \in V\), the projective
hypersurface defined by the degree-\(d\) homogenization of
\(tg + (1-t)f\) is smooth.
\end{itemize}

By Lemma~\ref{lemma:hype} and base change for cohomology with
compact support, the sheaf
\((\mathrm{R}^n\pi_!\overline{\mathbf{Q}}_{\ell,\mathscr{X}_0})|_{V_0}\) is of
pointwise weight \(\leq n\), that is, for
all closed points \(x\) of \(V_0\), the eigenvalues of Frobenius
\(F_x\) acting on the stalks are of \(\#\kappa(x)\)-weight at most
\(n\).

By the semicontinuity of weights
(Lemma~\ref{lemma:semicont}) applied to
the inclusion \(j\colon V_0 \to \mathbf{A}^1_{\mathbf{F}_q}\) and the lisse sheaf \((\mathrm{R}^n \pi_{!}\overline{\mathbf{Q}}_{\ell,\mathscr{X}_0})|_{V_0}\),
the stalk at \(0\) of the sheaf
\(\mathrm{R}^0 j_* ((\mathrm{R}^n \pi_! \overline{\mathbf{Q}}_{\ell,\mathscr{X}_0})|_{V_0})\) has \(q\)-weight
\(\leq n\).
Because \(\mathrm{R}\pi_{!}(\overline{\mathbf{Q}}_{\ell,\mathscr{X}_{0}}[n+1])\) is perverse coconnective,
and since \(\mathrm{R}^n \pi_{!} \overline{\mathbf{Q}}_{\ell,\mathscr{X}_{0}} = \mathcal{H}^{-1}(\mathrm{R}\pi_{!}(\overline{\mathbf{Q}}_{\ell,\mathscr{X}_{0}}[n+1]))\),
we can apply the perverse degeneration lemma
(Lemma~\ref{lemma:perverse-degeneration-lemma}) to \(\mathcal{F} = \mathrm{R}\pi_{!}(\overline{\mathbf{Q}}_{\ell,\mathscr{X}_0}[n+1])\), and deduce that the
compactly supported cohomology group
\(\mathrm{H}^n_c(X;\overline{\mathbf{Q}}_{\ell})\) also has
\(q\)-weight at most \(n\).


\end{situation}

\begin{situation}[Singular affine hypersurface: other cohomology degrees]
\label{sec:beyond-middle}
Finally, we address the remaining cohomology degrees.  Since \(X\) is
defined by one polynomial in \(\mathbf{A}^{n+1}\),
\(\overline{\mathbf{Q}}_{\ell,X}[n]\) is a perverse sheaf on \(X\) by
Lemma~\ref{lemma:perversity-local-system-on-smooth-variety} and
Lemma~\ref{lemma:local-equation-and-perverse-coconnective}.  We need
to show that for each integer \(i \neq 0\), the eigenvalues of
Frobenius acting on
\(\mathrm{H}^{n+i}_c(X;\overline{\mathbf{Q}}_{\ell})\) are of
\(q\)-weight at most \(n+i\).  If \(i \leq -1\), Artin's vanishing
theorem guarantees that
\(\mathrm{H}^{n+i}_c(X;\overline{\mathbf{Q}}_{\ell}) = 0\), so there
is nothing to prove.  For \(i \geq 1\), the weak Lefschetz theorem
(Lemma~\ref{lemma:gysin}) provides a surjection
\[
\mathrm{H}^{n-i}_c(X \cap L^{(i)};\overline{\mathbf{Q}}_{\ell}(-i))
\to \mathrm{H}^{n+i}_c(X;\overline{\mathbf{Q}}_{\ell}),
\]
where \(L_0^{(i)}\) is a sufficiently general codimension \(i\) linear
subspace of \(\mathbf{A}^{n+1}_{\mathbf{F}_q}\) (again, we may pass to
a finite field extension in order to find such a subspace).  Since
\(L^{(i)}\) is chosen in a general fashion,
we have \(\dim X \cap L^{(i)} = n-i\),
and \(X \cap L^{(i)}\)
is a hypersurface in an \((n-i+1)\)-dimensional affine space.
By the previous step, the
Frobenius eigenvalues of the group
\(\mathrm{H}^{n-i}_c(X \cap L^{(i)};\overline{\mathbf{Q}}_{\ell})\)
are of \(q\)-weight at most \(n-i\); so taking into account the Tate
twist, the Frobenius eigenvalues of
\(\mathrm{H}^{n+i}_c(X;\overline{\mathbf{Q}}_{\ell})\) are of
\(q\)-weight at most \(n+i\).  This completes the proof.\qed
\end{situation}

\bibliographystyle{plain}
\bibliography{rh_revised}

\end{document}